\documentclass[reqno]{gokova}

\usepackage{amssymb,amsmath,latexsym,epsfig,amscd}
\usepackage[all]{xy}
\usepackage{psfrag}
\usepackage{amsfonts}
\usepackage{mathrsfs}
\usepackage{graphicx}
\usepackage{array}
\usepackage{multirow}
\usepackage{longtable}
\usepackage{color}
\usepackage{marvosym}
\usepackage{stmaryrd}
\usepackage{hyperref}

\sloppy \pagestyle{plain}\binoppenalty=10000 \relpenalty=10000

\newlength{\picwidth} \setlength{\picwidth}{.75\textwidth}
\newlength{\miniwidth} \setlength{\miniwidth}{.5\textwidth}
\newlength{\nanowidth} \setlength{\nanowidth}{.33\textwidth}
\newlength{\melowidth} \setlength{\melowidth}{.88\textwidth}
\newlength{\leftminiwidth} \setlength{\leftminiwidth}{.45\textwidth}
\newlength{\rightminiwidth} \setlength{\rightminiwidth}{.45\textwidth}
\newlength{\minipagewidth} \setlength{\minipagewidth}{.45\textwidth}




%


\newcommand{\CC}{\mathbb{C}}

\newcommand{\PP}{\mathbb{P}}
\newcommand{\RR}{\mathbb{R}}
\newcommand{\cO}{\mathcal{O}}
\newcommand{\T}{\mycal{T}}

\newcommand{\ZZ}{{\mathbb Z}}
\newcommand{\CP}{\mathbb{C}\mathbb{P}}

\newcommand{\Fuk}{\text{\sf Fuk}}

\begin{document}
\def\E{\ifmmode{\mathbb E}\else{$\mathbb E$}\fi} 
\def\N{\ifmmode{\mathbb N}\else{$\mathbb N$}\fi} 
\def\R{\ifmmode{\mathbb R}\else{$\mathbb R$}\fi} 
\def\Q{\ifmmode{\mathbb Q}\else{$\mathbb Q$}\fi} 
\def\C{\ifmmode{\mathbb C}\else{$\mathbb C$}\fi} 
\def\H{\ifmmode{\mathbb H}\else{$\mathbb H$}\fi} 
\def\Z{\ifmmode{\mathbb Z}\else{$\mathbb Z$}\fi} 
\def\P{\ifmmode{\mathbb P}\else{$\mathbb P$}\fi} 
\def\T{\ifmmode{\mathbb T}\else{$\mathbb T$}\fi} 
\def\SS{\ifmmode{\mathbb S}\else{$\mathbb S$}\fi} 
\def\DD{\ifmmode{\mathbb D}\else{$\mathbb D$}\fi} 

\renewcommand{\a}{\alpha}
\renewcommand{\b}{\beta}
\renewcommand{\d}{\delta}
\newcommand{\D}{\Delta}
\newcommand{\e}{\varepsilon}
\newcommand{\g}{\gamma}
\newcommand{\G}{\Gamma}
\newcommand{\la}{\lambda}
\newcommand{\La}{\Lambda}
\newcommand{\n}{\nabla}
\newcommand{\var}{\varphi}
\newcommand{\s}{\sigma}
\newcommand{\Sig}{\Sigma}
\renewcommand{\t}{\tau}
\renewcommand{\th}{\theta}
\renewcommand{\O}{\Omega}
\renewcommand{\o}{\omega}
\newcommand{\z}{\zeta}

\newcommand{\ben}{\begin{enumerate}}
\newcommand{\een}{\end{enumerate}}
\newcommand{\be}{\begin{equation}}
\newcommand{\ee}{\end{equation}}
\newcommand{\bea}{\begin{eqnarray}}
\newcommand{\eea}{\end{eqnarray}}
\newcommand{\bc}{\begin{center}}
\newcommand{\ec}{\end{center}}

\newtheorem{thm}{Theorem}[section]
\newtheorem{cor}[thm]{Corollary}
\newtheorem{lem}[thm]{Lemma}
\newtheorem{prop}[thm]{Proposition}
\newtheorem{ax}{Axiom}
\newtheorem{conj}[thm]{Conjecture}

\theoremstyle{definition}
\newtheorem{defn}{Definition}[section]

\theoremstyle{remark}
\newtheorem{rem}{\rm\bfseries{Remark}}[section]
\newtheorem*{notation}{Notation}

\newtheorem{ques}{\rm\bfseries{Question}}[section]
\newtheorem{cons}[rem]{\rm\bfseries{Construction}}
\newtheorem{exm}[rem]{\rm\bfseries{Example}}


\newtheorem{question}[thm]{Question}

\address{
Ludmil Katzarkov\\
University of Vienna.
}
\email{lkatzark@math.uci.edu}

\address{
Victor Przyjalkowski\\
Steklov Mathematical Institute.
}
\email{victorprz@mi.ras.ru, victorprz@gmail.com}

\title[Landau--Ginzburg models --- old and new]{Landau--Ginzburg models --- old and new}

\author[KATZARKOV and PRZYJALKOWSKI]{ Ludmil Katzarkov, Victor Przyjalkowski}

\thanks{L.\,K was funded by NSF Grant DMS0600800, NSF FRG Grant DMS-0652633, FWF
Grant P20778, and an ERC Grant --- GEMIS,  V.\,P. was funded by FWF grant P20778,
RFFI grants 11-01-00336-a and 11-01-00185-a, grants MK$-1192.2012.1$,
NSh$-5139.2012.1$, and AG Laboratory GU-HSE, RF government
grant, ag. 11 11.G34.31.0023.
}

\begin{abstract}
In the last three  years  a new concept --- the concept  of  wall crossing
has emerged. The current situation with wall crossing phenomena, after
papers of Seiberg--Witten, Gaiotto--Moore--Neitzke, Vafa--Cecoti and
seminal works by Donaldson--Thomas, Joyce--Song,
Maulik--Nekrasov--Okounkov--Pandharipande, Douglas, Bridgeland, and
Kontsevich--Soibelman,  is very similar to the situation with Higgs
Bundles after the works of Higgs and Hitchin --- it is clear  that a
general ``Hodge type''  of theory exists and needs to be developed.
Nonabelian Hodge theory did  lead to strong mathematical applications~--- uniformization, Langlands program to mention a few. In the wall crossing
it is also clear that some ``Hodge type'' of theory   exists ---
Stability Hodge Structure (SHS). This theory  needs to be developed
in order to reap some
mathematical benefits --- solve long standing problems in algebraic
geometry.  In this paper we  look at SHS  from the perspective of
Landau--Ginzburg models and we  look  at some applications. We
consider simple  examples and explain some conjectures these examples
suggest.
\end{abstract}
\keywords{Hodge structures; categories; Landau--Ginzburg models}

\maketitle

\section{Introduction}

Mirror symmetry is a physical duality between $N= 2$ superconformal
field theories.  In the  1990's
Maxim Kontsevich reinterpreted this concept from physics as an
incredibly deep and far-reaching mathematical duality now known as
Homological Mirror Symmetry (HMS).  In a famous lecture in 1994, he
created a  frenzy in the mathematical community which lead to
 synergies between diverse mathematical disciplines:
symplectic geometry, algebraic geometry, and category theory.  HMS is
now the cornerstone of an immense field of active mathematical
research.

In the last three  years  a new concept --- the concept  of  wall crossing has emerged. The current situation with wall crossing phenomena, after  papers of Seiberg--Witten, Gaiotto--Moore--Neitzke, Vafa--Cecoti and seminal works by Donaldson--Thomas, Joyce--Song, Maulik--Nekrasov--Okounkov--Pandharipande, Douglas, Bridgeland, and Kontsevich--Soibelman,  is very similar to the situation with Higgs Bundles after the works of Higgs and Hitchin --- it is clear  that a general ``Hodge type''  of theory exists and needs to be developed. Nonabelian Hodge theory did  lead to strong mathematical applications --- uniformization, Langlands program to mention a few. In the wall crossing it is also clear that some ``Hodge type'' of theory needs to be developed in order to reap some
mathematical benefits --- solve long standing problems in algebraic geometry.

The foundations of these new Hodge structures, which we call \emph{Stability Hodge Structures (SHS)} will appear in a paper by the first author, Kontsevich, Pantev  and Soibelman ---
\cite{KKPS}.  In this paper we will look at SHS  from the perspective of Landau--Ginzburg models and we will also look  at some applications. We will consider simple  examples and explain some conjectures these examples suggest. The further elaboration and examples will appear in \cite{KKPS} and \cite{DKK}.

We start with the classical interpretation of wall crossings in Landau--Ginzburg models. After that we describe a hypothetical program of ``Stability Hodge Theory'' which combine Nonabelian and Noncommutative Hodge theory.  We consider some  possible applications  in this paper. First we consider an  approach to the conjecture  that the universal covering of a smooth projective variety is holomorphically convex.  This is  a classical question in algebraic geometry proven by the first author and collaborators for linear fundamental groups \cite{EKPR}. It was believed that for nonresidually finite fundamental groups one needs a different approach and in this paper we outline a procedure of extending the argument to the nonresidually finite case based on SHS. We also outline  possible applications to  Hodge structures with many filtrations and  to Sarkisov's theory.

Stability Hodge Structure is a notion which originates from functions of one complex variable  and combinatorics --- gaps,  polygons, and circuits. We give these classical  notions a new read through HMS and category theory, dressing them up with some cluster varieties and integrable systems.  After that we  enhance these data  additionally with some  basic nonabelian Hodge theory in order to  get  a property we need --- strictness.  In the same way as moduli spaces of Higgs bundles parameterize spectral coverings, the moduli space of deformed stability conditions parameterizes Landau--Ginzburg models.

We believe this is only the tip  of the iceberg and this very rich motivic conglomerate of ideas will play an important role in the  studies of categories and of algebraic cycles.
In particular we suggest that the   categorical notion of spectra can be seen as a Hodge theoretic notion related to the ``homotopy type of a category''.

\medskip

The paper is organized as follows. In Sections~\ref{section:clusters} and~\ref{section:Minkowski} we describe the classical approach to Landau--Ginzburg models and wall crossings. After that in Sections~\ref{section:wall crossings},~\ref{section:SHS},~\ref{section:Higgs bundles} we define Stability  Hodge Structures and build a parallel with Simpson's nonabelian Hodge  theory.
 We also discuss possible applications in Sections~\ref{section:spectra},~\ref{section:multi LG},~\ref{section:birational}.

\section{Classical  Landau--Ginzburg models and wall crossings }

\label{section:clusters}

In this section we recall  the ``classical'' way of interpreting wall crossing in the case of Landau--Ginzburg models.
We will establish a certain combinatorial framework on which we later base our constructions.

We recall the notion of  Landau--Ginzburg models from the Laurent polynomials point of view. For more details see, say,~\cite{Prz07} and references therein. Cluster transformations and Minkowski decompositions for Landau--Ginzburg models are discussed in~\cite{CCGGK12},~\cite{ACGK12},~\cite{CCGK13}.

Let  $X$ be a  smooth Fano variety  of dimension $n$. We can associate \emph{a quantum cohomology ring} $QH^*(X)=H^*(X,\Q)\otimes \Lambda$ to it, where $\Lambda$
is the Novikov ring for $X$. The multiplication in this ring, the so called \emph{quantum multiplication}, is given by \emph{(genus zero)
Gromov--Witten invariants} --- numbers counting rational curves lying in $X$. Given these data one can associate \emph{a regularized quantum differential operator} $Q_X$ (the second Dubrovin connection) --- the regularization of an operator associated with connection in the trivial vector bundle
given by a quantum multiplication by the canonical class $K_X$. In ``good'' cases such as we consider (for Fano threefolds or complete
intersections) the equation $Q_XI=0$ has a unique normalized analytic solution $I=1+a_1t+a_2t^2+\ldots$.

\begin{defn}
\label{definition: toric LG}
\emph{A toric Landau--Ginzburg model} is a Laurent polynomial $f\in \CC[x_1^{\pm 1}, \ldots, x_n^{\pm}]$ such that:
\begin{description}
  \item[Period condition] The constant term of $f^i\in \CC[x_1^{\pm 1}, \ldots, x_n^{\pm}]$ is $a_i$ for any $i$ (this means that $I$ is a period of a family $f\colon (\CC^*)^n\to \CC$, see~\cite{Prz07}).
  \item[Calabi--Yau condition] Any fiber of $f\colon (\CC^*)^n\to \CC$ after some fiberwise compactification has trivial dualizing sheaf.
  \item[Toric condition] There is an embedded degeneration $X\rightsquigarrow T$ to a toric variety $T$ whose fan polytope
  (the convex hull of generators of its rays) coincides with the Newton polytope (the convex hull of non-zero coefficients) of $f$.
A Laurent polynomial without the toric condition is called \emph{a weak Landau--Ginzburg model}.
\end{description}
\end{defn}

Toric Landau--Ginzburg models for complete intersections can be derived from the Hori--Vafa suggestions (see, say,~\cite{Prz09}).

\begin{defn}
Let $X$ be a general Fano complete intersection of hypersurfaces of degrees $d_1,\ldots,d_k$ in $\PP^N$.
Let $d_0=N-d_1-\ldots- d_k$ be its index. Then a Laurent polynomial
$$
f_X=\frac{(x_{1,1}+\ldots+x_{1,d_1-1}+1)^{d_1}\cdot\ldots\cdot(x_{k,1}+\ldots+x_{k,d_k-1}+1)^{d_k}}{\prod x_{ij}}+x_{01}+\ldots+x_{0d_0-1}.
$$
we call \emph{of Hori--Vafa type}.
\end{defn}


\begin{thm}[Proposition 9 in~\cite{Prz09} and Theorem 2.2,~\cite{IP11}]
The polynomial $f_X$ is a toric Landau--Ginzburg model for $X$.
\end{thm}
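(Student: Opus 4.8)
The plan is to check, in turn, the three conditions in the definition of a toric Landau--Ginzburg model: the period condition, the Calabi--Yau condition, and the toric condition. The period condition is the combinatorial and analytic heart of the statement, while the other two concern a single explicit compactification and a single degeneration.

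First I would establish the period condition by reducing it to the ``core'' of $f_X$. Write $f_X=g+h$, where
\[
g=\frac{\prod_{j=1}^k(x_{j,1}+\ldots+x_{j,d_j-1}+1)^{d_j}}{\prod_{j,l}x_{j,l}},\qquad h=x_{01}+\ldots+x_{0,d_0-1}.
\]
In the expansion $f_X^i=\sum_{a+b=i}\binom{i}{a}g^ah^b$ the two factors involve disjoint sets of variables, so the constant term of each summand factors as $\mathrm{ct}(g^a)\,\mathrm{ct}(h^b)$; and since every $x_{0l}$ occurs in $h$ only with positive exponents, $\mathrm{ct}(h^b)=0$ for $b>0$ while $\mathrm{ct}(h^0)=1$. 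Hence the linear variables drop out and $\mathrm{ct}(f_X^i)=\mathrm{ct}(g^i)$. The $k$ groups of core variables are themselves disjoint, so this constant term factors over $j$, and by the multinomial theorem the coefficient of $\prod_l x_{j,l}^i$ in $(x_{j,1}+\ldots+x_{j,d_j-1}+1)^{d_ji}$ is $\binom{d_ji}{i,\ldots,i}$. Therefore
\[
\mathrm{ct}(f_X^i)=\prod_{j=1}^k\frac{(d_ji)!}{(i!)^{d_j}}.
\]

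It then remains to identify this explicit series with the normalized solution $I=\sum_i a_it^i$ of the regularized quantum differential operator $Q_X$. For this I would invoke Givental's mirror theorem for Fano complete intersections: the component of the small $I$-function of $X$ along $1\in H^0(X)$ is a hypergeometric series whose raw coefficients are $\big(\prod_{j=1}^k(d_ji)!\big)/(i!)^{N+1}$, and the regularization and normalization built into $Q_X$ are designed precisely so that the coefficient of $t^i$ in $I$ becomes $\prod_{j=1}^k(d_ji)!/(i!)^{d_j}$. Matching the two series term by term yields $a_i=\mathrm{ct}(f_X^i)$. I expect this identification to be the main obstacle: the constant-term side is elementary, but relating it to Gromov--Witten theory rests on the mirror theorem and on carefully tracking the regularization, together with reconciling the two natural gradings of the quantum period (essentially the substitution $t\mapsto t^{\,d_0+1}$, where $d_0+1=N+1-\sum_j d_j$).

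Finally I would dispatch the geometric conditions. Let $\Delta$ be the Newton polytope of $f_X$; the monomial $1/\prod_{j,l}x_{j,l}$, the top-degree monomials of $g$, and the $x_{0l}$ together place the origin in the interior of $\Delta$, and one checks that $\Delta$ is reflexive (its unique interior lattice point is the origin and its polar dual is again a lattice polytope). Then $\PP_\Delta$ is a Gorenstein Fano toric variety, the closure in $\PP_\Delta$ of a generic fibre of $f_X\colon(\CC^*)^n\to\CC$ is an anticanonical hypersurface, and adjunction gives it trivial dualizing sheaf; this establishes the Calabi--Yau condition (passing to a maximal projective crepant partial resolution if a smooth model is wanted). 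For the toric condition I would exhibit a flat degeneration of $X$ to a Gorenstein toric Fano variety $T$ --- for complete intersections in $\PP^N$ this is provided by the Batyrev--Borisov nef-partition construction, equivalently by an explicit Minkowski/Gr\"obner degeneration --- and verify that the convex hull of the primitive ray generators of the fan of $T$ coincides with $\Delta$. This last comparison of polytopes is direct but combinatorially involved.
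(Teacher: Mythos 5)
A preliminary remark: the paper itself contains no proof of this theorem --- it is quoted from \cite{Prz09} (period and Calabi--Yau conditions) and \cite{IP11} (toric condition) --- so the only possible comparison is with those cited proofs. Your three-part plan (constant terms matched against Givental's hypergeometric series; fiberwise compactification for the Calabi--Yau condition; an explicit toric degeneration whose fan polytope is the Newton polytope) does mirror the structure of those arguments in outline.

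However, your execution of the period condition rests on a misreading of $f_X$ that breaks the computation. In the Hori--Vafa polynomial the denominator $\prod x_{ij}$ runs over \emph{all} variables, including the linear ones $x_{0l}$. The paper's notation is admittedly sloppy (and its formula for $d_0$ is off by one from the true index), but its own examples settle the intended meaning: for the quadric threefold the Hori--Vafa model is $f_0=\frac{(x+1)^2}{xyz}+y+z$, with the linear variables $y,z$ dividing the denominator, and likewise $\frac{(x+y+1)^3}{xyz}+z$ for the cubic threefold. Consequently your $g$ and $h$ are \emph{not} supported on disjoint sets of variables, the reduction $\mathrm{ct}(f_X^i)=\mathrm{ct}(g^i)$ fails, and the closed formula $\mathrm{ct}(f_X^i)=\prod_j (d_ji)!/(i!)^{d_j}$ is false. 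One can see this structurally: your formula is nonzero for every $i$, whereas the genuine constant terms vanish unless $i$ is divisible by the Fano index $r$, because the only contributions come from cross terms $\binom{i}{a}g^ah^{i-a}$ in which $h^{i-a}$ exactly cancels the factor $\bigl(\prod_l x_{0l}\bigr)^{-a}$ coming from $g^a$, forcing $i-a=a\cdot\#\{x_{0l}\}$, i.e.\ $i=ar$. Concretely, for the quadric threefold the true values are $a_1=a_2=0$ and $a_3=12$ (from the cross term $3\cdot\frac{(x+1)^2}{xyz}\cdot 2yz$), while your formula predicts $a_1=2$; similarly for the cubic threefold it predicts $a_1=6$ instead of $0$. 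These cross terms are precisely where the index grading you allude to enters, and they change the series that must be matched with Givental's $I$-function (whose regularized coefficients carry an extra factor $(rd)!$ and live only in degrees $rd$), so this is not a cosmetic slip but the combinatorial heart of the period condition. Your sketches of the Calabi--Yau and toric conditions are reasonable in outline, but they too should be rechecked against the correct polynomial, since its Newton polytope differs from the one your reading produces.
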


\begin{defn}
Let $f$ be a Laurent polynomial in $\C [x_0^{\pm 1},\ldots,x_n^{\pm 1}]$.
Then a (non-toric birational) symplectomorphism is called \emph{of cluster type}
if
 it is a composition of toric change of variables and symplectomorphisms of type
$$
y_0=x_0\cdot f_0(x_1,\ldots,x_i)^{\pm 1}, \ \ y_1=x_1,\ldots,y_n=x_n,
$$
for some Laurent polynomial $f_0$ and under this change of variables $f$ goes to a Laurent polynomial for which a Calabi--Yau condition holds.

It is called
\emph{elementary of cluster type} if (up to toric change of variables)
$$
f_0=x_1+\ldots+x_i+1.
$$
It is called \emph{of linear cluster type} if it is a
composition of elementary symplectomorphisms of cluster type and toric change of variables.
\end{defn}

\begin{rem}
For all examples in the rest of the paper the Calabi--Yau condition holds for all considered cluster type transformations.
\end{rem}

\begin{prop}
Let $f$ be a weak Landau--Ginzburg model for $X$. Let $f^\prime$ be a Laurent polynomial obtained from $f$ by symplectomorphism of
cluster type. Then $f^\prime$ is a weak Landau--Ginzburg model for $X$.
\end{prop}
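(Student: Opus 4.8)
The plan is to notice first that, among the two defining properties of a weak Landau--Ginzburg model, the Calabi--Yau condition for $f'$ is already guaranteed: it is built into the definition of a symplectomorphism of cluster type (``under this change of variables $f$ goes to a Laurent polynomial for which a Calabi--Yau condition holds''). Hence the entire content of the proposition is the \emph{period condition}, namely that $\mathrm{const}\big((f')^i\big)=a_i$ for all $i$. Since $a_i=\mathrm{const}(f^i)$ by hypothesis, it suffices to show that a cluster-type transformation preserves the constant terms of the powers $f^i$.

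Writing $\psi$ for the substitution inverse to the given change of variables, we have $(f')^i=f^i\circ\psi$, so I would reduce the statement to the following: each elementary building block preserves the constant term of an arbitrary Laurent polynomial $F$ whose image is again a Laurent polynomial. A cluster-type symplectomorphism is by definition a composition of toric changes of variables and elementary steps $y_0=x_0\,f_0(x_1,\ldots,x_i)^{\pm 1}$, $y_j=x_j$ ($j\ge 1$), so it is enough to treat these two cases and then compose by induction.

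The conceptual reason both building blocks preserve the period is that they preserve the logarithmic volume form $\Omega=\frac{dx_0}{x_0}\wedge\cdots\wedge\frac{dx_n}{x_n}$ up to sign: for a monomial change the Jacobian is the determinant $\pm 1$ of the exponent matrix, while for the elementary step $\frac{dy_0}{y_0}=\frac{dx_0}{x_0}\pm\frac{df_0}{f_0}$ and the extra term $\pm\frac{df_0}{f_0}$, being a combination of $\frac{dx_1}{x_1},\ldots,\frac{dx_i}{x_i}$, wedges to zero against $\frac{dx_1}{x_1}\wedge\cdots\wedge\frac{dx_n}{x_n}$; thus $\Omega$ is preserved exactly. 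Since the period is the torus integral of $\frac{1}{1-tf}\,\Omega$, this invariance is what forces the constant terms to agree. To make it rigorous I would argue algebraically. For a toric change the substitution acts on the exponent lattice by an element of $\mathrm{GL}_{n+1}(\ZZ)$ fixing the origin, so it merely permutes monomials and fixes the constant one, leaving $\mathrm{const}$ unchanged. For the elementary step, decompose $F=\sum_{m_0}x_0^{m_0}\,G_{m_0}(x_1,\ldots,x_n)$ by powers of $x_0$; then $F\circ\psi=\sum_{m_0}y_0^{m_0}\,f_0^{\mp m_0}\,G_{m_0}(y_1,\ldots,y_n)$, and since $f_0$ and the $G_{m_0}$ do not involve $y_0$, the $y_0$-degrees do not mix. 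Hence the $y_0$-degree-zero part of $F\circ\psi$ is exactly $G_0(y_1,\ldots,y_n)$, whose constant term is $\mathrm{const}(F)$. Taking $F=f^i$ yields $\mathrm{const}\big((f')^i\big)=\mathrm{const}(f^i)=a_i$.

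The one point requiring care --- and the step I would expect to be the main obstacle --- is that for $m_0$ of the appropriate sign the factor $f_0^{\mp m_0}$ is in general only a rational function, so a priori $F\circ\psi$ is merely rational. Here one must invoke the hypothesis, implicit in ``of cluster type'', that $(f')^i=F\circ\psi$ is again a genuine Laurent polynomial; the identification of its $y_0$-degree-zero part with $G_0$ is nonetheless valid precisely because $f_0$ is independent of $y_0$, so no cancellation across different $y_0$-degrees can occur. Granting this, the composition over a chain of elementary and toric steps follows by induction, each step being legitimate since its output is required to be a Laurent polynomial, and the proposition follows.
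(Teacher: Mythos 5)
Your proposal is correct in substance, but it proves the statement by a different mechanism than the paper. The paper's entire proof is the one-line observation that the constant terms $a_i$ are encoded in the period $\int_{|x_1|=\cdots=|x_n|=1}\frac{1}{1-\lambda f}\prod\frac{dx_i}{x_i}$, and that this integral is unchanged by cluster-type symplectomorphisms; in other words, what you present as the ``conceptual reason'' (invariance of the logarithmic volume form) is exactly what the paper takes as the whole proof, with the analytic details (choice of cycle, homology class of its image, radii avoiding zeros of $f_0$) left implicit. You instead discard the integral and verify invariance of the constant terms purely algebraically, via the $y_0$-grading: since $f_0$ does not involve $y_0$, the substitution $x_0=y_0f_0^{\mp1}$ cannot mix $y_0$-degrees, so the degree-zero piece $G_0$, and hence the constant term, survives unchanged; uniqueness of the decomposition $\sum_{m_0}y_0^{m_0}H_{m_0}$ over the field $\CC(y_1,\ldots,y_n)$ justifies this even though the other coefficients $f_0^{\mp m_0}G_{m_0}$ are a priori only rational. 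For a single elementary step this is a genuine gain in rigor over the paper: no cycles, no convergence, and a precise identification of where Laurent-polynomiality enters. (One small simplification: you do not need to posit that $(f')^i$ is a Laurent polynomial as a separate hypothesis; it is automatic, being a power of the Laurent polynomial $f'$.)

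The one place where your route is weaker than the paper's is the composition step. The definition of ``cluster type'' only requires that the \emph{final} composite turns $f$ into a Laurent polynomial; it does not say that the intermediate transforms are Laurent polynomials, and your induction (``each step being legitimate since its output is required to be a Laurent polynomial'') silently assumes that they are. If some intermediate stage is merely a rational function, its ``constant term'' is not defined and your induction does not close; you would need either to adopt the stronger reading of the definition (true in all of the paper's examples, and built into the notion of \emph{linear} cluster type), or to run the grading argument simultaneously in all variables touched by the chain, which is no longer a one-line uniqueness statement. The paper's integral formulation is insensitive to this issue: invariance of the form $\prod\frac{dx_i}{x_i}$ composes at the level of birational maps, and the period is compared directly between the two ends of the chain, never interrogating the intermediate stages. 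Either patch the induction or, for general chains, fall back on the period argument you already sketched.
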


\begin{proof}
A period giving constant terms of Laurent polynomials
is, up to proportion, an integral of the (depending on $\lambda\in \CC$) form $\frac{1}{1-\lambda f}\prod \frac{dx_i}{x_i}$
over a standard $n$-cycle on the torus $|x_1|=\ldots =|x_n|=1$. This integral does not change under cluster type symplectomorphisms.
\end{proof}

\begin{exm}
Let $X$ be a quadric threefold. There are two types of degenerations of $X$ to normal toric varieties inside the space
of quadratic forms. That is,
$$
T_0=\{x_1x_2=x_2^3\}\subset \PP[x_1:x_2:x_3:x_4:x_5]
$$
and
$$
T_1=\{x_1x_2=x_3x_4\}\subset \PP[x_1:x_2:x_3:x_4:x_5].
$$
Let
$$
f_0=\frac{(x+1)^2}{xyz}+y+z
$$
be a weak Landau--Ginzburg model of Hori--Vafa type for $X$. Let
$$
f_1=\frac{(x+1)}{xyz}+y(x+1)+z
$$
be its cluster-type transformation given by the change of variables
$$
\frac{y}{(x+1)}\mapsto y.
$$
One can see that $T_0=T_{f_0}$ and $T_1=T_{f_1}$.
\end{exm}

\begin{rem}
One can see that applying the same change of variables a second time to $f_1$ gives back (up to toric change of variables) $f_0$.
\end{rem}

\begin{exm}
Let $X$ be a cubic threefold. There are two types of degenerations of $X$ to normal toric varieties inside the space
of cubic forms. That is,
$$
T_0=\{x_1x_2x_3=x_4^3\}\subset \PP[x_1:x_2:x_3:x_4:x_5]
$$
and
$$
T_1=\{x_1x_2x_3=x_4^2x_5\}\subset \PP[x_1:x_2:x_3:x_4:x_5].
$$
Let
$$
f_0=\frac{(x+y+1)^3}{xyz}+z
$$
be a weak Landau--Ginzburg model of Hori--Vafa type for $X$. Let
$$
f_1=\frac{(x+y+1)^2}{xyz}+z(x+y+1)
$$
be its cluster-type transformation given by the change of variables
$$
\frac{z}{(x+y+1)}\mapsto z.
$$
One can see that $T_0=T_{f_0}$ and $T_1=T_{f_1}$.
\end{exm}

\begin{rem}
Applying this change of variables a second time to $f_1$  we  get (up to toric change of variables) $f_1$ again and applying it a third time we get  $f_0$ back.
\end{rem}

\begin{exm}
Let $X$ be a cubic fourfold. There are three types of degenerations of $X$ to normal toric varieties inside the space
of cubic forms. That is,
$$
T_{00}=\{x_1x_2x_3=x_4^3\}\subset \PP[x_1:x_2:x_3:x_4:x_5:x_6],
$$
$$
T_{10}=\{x_1x_2x_3=x_4^2x_5\}\subset \PP[x_1:x_2:x_3:x_4:x_5:x_6],
$$
and
$$
T_{11}=\{x_1x_2x_3=x_4x_5x_6\}\subset \PP[x_1:x_2:x_3:x_4:x_5:x_6],
$$
Let
$$
f_{00}=\frac{(x+y+1)^3}{xyzt}+z+t
$$
be a weak Landau--Ginzburg model of Hori--Vafa type for $X$. Let
$$
f_{10}=\frac{(x+y+1)^2}{xyzt}+z(x+y+1)+t
$$
be its cluster-type transformation given by the change of variables
$$
\frac{z}{(x+y+1)}\mapsto z
$$
and let
$$
f_{11}=\frac{(x+y+1)}{xyzt}+z(x+y+1)+t(x+y+1)
$$
be the cluster-type transformation of $f_{10}$ given by the change of variables
$$
\frac{t}{(x+y+1)}\mapsto t.
$$
One can see that $T_{00}=T_{f_{00}}$, $T_{10}=T_{f_{10}}$, and $T_{11}=T_{f_{11}}$.
\end{exm}

\begin{rem}
Applying the first change of variables a second time to $f_{10}$ we get (up to toric change of variables) $f_{10}$ again, applying it once more
we get  $f_{00}$, and applying any change of variables to $f_{11}$ we  get  $f_{10}$.
\end{rem}

\begin{exm}
Consider quadrics in $\PP=\PP(1,1,1,1,2)$. Denote the coordinates in $\PP$ by $x_0$, $x_1$, $x_2$, $x_3$, $x_4$, where the weight
of $x_4$ is 2.
The general quadric is
$$
T_1=\{F_2(x_0,x_1,x_2,x_3)+\lambda x_4=0\},
$$
where $F_2$ is a quadratic form and $\lambda\in \CC\setminus 0$.
Projection on the hyperplane generated by $x_0,\ldots,x_3$ gives an isomorphism of $T_1$ with $\PP^3$.
The general variety with $\lambda=0$ is a toric variety
$$
T_2=\{x_0x_1=x_2x_3\}.
$$
It degenerates to
$$
T_3=\{x_1x_2=x_0^2\}.
$$
One can see that $T_3$ is an image of $\PP(1,1,2,4)$ under the Veronese map $v_2$.

Consider the following 3 weak Landau--Ginzburg models for $\PP^3$:
$$
f_1=x+y+z+\frac{1}{xyz},
$$
$$
f_2=x+\frac{y}{x}+\frac{z}{x}+\frac{1}{xy}+\frac{1}{xz},
$$
$$
f_3=\frac{(x+1)^2}{xyz}+\frac{y}{z}+z.
$$
Changing toric variables one can rewrite $f_1$ as
$$
f_1^{\prime }=z(x+1)+y+\frac{1}{xyz^2},
$$
$$
f_1^{\prime \prime}=z(x+1)+\frac{y}{z}+\frac{1}{xyz}.
$$
The cluster-type change of variables
$$
x\mapsto x,\ \ y\mapsto y,\ \ z(x+1)\mapsto z
$$
sends $f_1^\prime$ to a Laurent polynomial that differs from $f_3$ by a toric change of variables and $f_1^{\prime \prime}$ to a polynomial
$$
z+\frac{(x+1)y}{z}+\frac{(x+1)}{xyz},
$$
which differs from $f_2$ by toric change of variables.
The cluster-type change of variables
$$
x\mapsto x,\ \ y(x+1)\mapsto y,\ \ z\mapsto z
$$
sends the last expression to $f_3$.

One can see that $T_1=T_{f_1}$, $T_2=T_{f_2}$, and $T_3=T_{f_3}$.
\end{exm}

\begin{thm}[Hacking--Prokhorov,~\cite{HaPr05}]
Let $X$ be a degeneration of $\PP^2$ to a $\Q$-Gorenstein surface with quotient singularities. Then $X=\PP(a^2,b^2,c^2)$,
where $(a, b, c)$ is any solution of the Markov equation $a^2+b^2+c^2=3abc$.
\end{thm}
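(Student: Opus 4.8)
The plan is to reduce the statement to numerical invariants of a log del Pezzo surface and then feed those into the classification of the quotient singularities that admit a $\Q$-Gorenstein smoothing. First I would record what a $\Q$-Gorenstein degeneration preserves: the holomorphic Euler characteristic $\chi(\cO_X)=\chi(\cO_{\PP^2})=1$ is constant by flatness, and because the family is $\Q$-Gorenstein the self-intersection $K_X^2=K_{\PP^2}^2=9$ is constant as well. Thus $X$ is a normal rational projective surface with log terminal (quotient) singularities, $K_X^2=9$, and --- as one checks for such a degeneration --- Picard rank one; that is, a $\Q$-factorial log del Pezzo surface of rank one and degree nine.

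Next I would pin down the singularities. As the central fibre of a $\Q$-Gorenstein smoothing, each singular point of $X$ itself carries a $\Q$-Gorenstein smoothing, so by the Koll\'ar--Shepherd-Barron description these are exactly the cyclic quotient singularities of class $T$, namely $\frac{1}{dn^2}(1,dna-1)$ with $\gcd(a,n)=1$, whose Milnor number is $\mu=d-1$. Since $X$ is $\Q$-factorial of Picard rank one we have $b_2(X)=1=b_2(\PP^2)$, and in a $\Q$-Gorenstein smoothing the second Betti number jumps precisely by the total Milnor number of the degenerate points; hence $\sum_i (d_i-1)=0$ and every singularity is \emph{primitive}, i.e. a Wahl singularity $\frac{1}{n^2}(1,na-1)$.

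It then remains to identify $X$. A $\Q$-factorial rational surface of Picard rank one all of whose singularities are of Wahl type is, by the classification of such log del Pezzo surfaces, a weighted projective plane, with exactly three singular points sitting over its three vertices; since the order of the local class group of a Wahl singularity $\frac{1}{n^2}(1,na-1)$ is the square $n^2$, the three weights are perfect squares and $X=\PP(a^2,b^2,c^2)$. The Markov equation now drops out of a degree computation: for a weighted projective plane one has
\[
K_X^2=\frac{(a^2+b^2+c^2)^2}{a^2b^2c^2},
\]
so the constraint $K_X^2=9$ is exactly $a^2+b^2+c^2=3abc$.

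The hard part will be the classification in the third paragraph --- showing that Picard rank one together with Wahl singularities forces a weighted projective plane, and in particular exactly three singular points --- and, hand in hand with it, verifying that the vertex singularities $\frac{1}{a^2}(b^2,c^2)$ really are of Wahl type precisely when $(a,b,c)$ is a Markov triple; this is where the arithmetic of the Markov equation is genuinely used rather than merely recovered at the end. The converse --- that every Markov triple yields an honest $\Q$-Gorenstein degeneration of $\PP^2$ --- I would establish afterwards, either by constructing explicit smoothings of $\PP(a^2,b^2,c^2)$ or inductively through the Markov mutations connecting every triple back to $(1,1,1)$, which corresponds to $\PP^2$ itself.
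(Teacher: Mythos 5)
You should first be aware that the paper you were given does not prove this statement at all: it is quoted verbatim as a theorem of Hacking--Prokhorov \cite{HaPr05}, so the only meaningful comparison is with the proof in that reference. Your preliminary reductions do match the standard opening moves of that proof: constancy of $\chi(\cO)$ and of $K^2$ in a $\Q$-Gorenstein family, the Koll\'ar--Shepherd-Barron identification of the relevant singularities as class $T$, the Milnor-number count forcing every singularity to be a Wahl singularity $\frac{1}{n^2}(1,na-1)$, and the computation $K_X^2=(a^2+b^2+c^2)^2/(a^2b^2c^2)$ converting $K^2=9$ into the Markov equation. Two small repairs: class $T$ also contains the non-cyclic du Val points of type $D$ and $E$, which are excluded by the same count $\sum\mu_i=0$ rather than by the cyclic normal form; and the Picard rank should not be asserted in advance and then used to get $b_2(X)=1$ --- the clean logic is that $b_2(X)\geq 1$ together with $b_2(\PP^2)=b_2(X)+\sum\mu_i=1$ forces both $b_2(X)=1$ and $\mu_i=0$ simultaneously, so rank one is an output of the jump formula, not an input.

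The genuine gap is your third paragraph. There is no pre-existing ``classification of $\Q$-factorial rank-one rational surfaces with Wahl singularities'' that you may cite to conclude that $X$ is a weighted projective plane with exactly three singular points: that assertion \emph{is} the theorem of Hacking--Prokhorov, so as written your argument is circular at precisely its central step. What fills this hole in \cite{HaPr05} is an inductive birational mechanism: if $X$ carries a singular point $\frac{1}{n^2}(1,na-1)$ with $n>1$, one extracts a suitable curve over it and runs the minimal model program on the resulting surface; the output is again a $\Q$-Gorenstein degeneration of $\PP^2$ whose singularity data are transformed by exactly the Markov mutation $(a,b,c)\mapsto (a,b,3ab-c)$, and induction down the Markov tree terminates at $\PP(1,1,1)=\PP^2$. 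This simultaneously proves that $X$ is a weighted projective plane, that the weights are squares of a Markov triple, and that every triple occurs --- so the mutation structure you relegate to the converse direction in your last paragraph is in fact the engine of the forward direction as well. It is also precisely the structure that the surrounding paper mirrors on the Landau--Ginzburg side (Proposition~\ref{proposition:Galkin}, where elementary cluster transformations realize these same mutations), which is the reason the theorem is quoted there in the first place.
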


\begin{rem}
All Markov triples are obtained from the basic one $(1,1,1)$ by a sequence of \emph{elementary transforms}
$$
(a,b,c)\mapsto (a,b, 3ab-c).
$$
\end{rem}

\begin{prop}[\cite{GU10}, see also~\cite{CMG13}]
\label{proposition:Galkin}
Let $(a,b,c)$ be a Markov triple and let $f$ be a weak Landau--Ginzburg model for $\PP^2$ such that $T_f=\PP(a^2,b^2,c^2)$.
Then there is an elementary cluster-type transformation such that for the image $f^\prime$ of $f$ under this transformation $T_{f^\prime}=\PP(a^2,b^2,(3ab-c)^2)$.
\end{prop}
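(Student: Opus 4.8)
The plan is to reduce the statement to a combinatorial assertion about the Newton polytope of $f$ and then to identify the elementary cluster-type transformation with a lattice-polytope mutation of the associated Fano triangle. By the Proposition proved above, the image $f'$ of $f$ under any cluster-type transformation is again a weak Landau--Ginzburg model for $\PP^2$, so the constant-term (period) data are automatically preserved and the only thing left to prove is the purely toric statement that $T_{f'}=\PP(a^2,b^2,(3ab-c)^2)$, i.e. that $\mathrm{Newt}(f')$ is the fan polytope of that weighted projective plane.

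First I would record the combinatorial content of the hypothesis. Writing $N=\ZZ^2$ and $M$ for its dual, the equality $T_f=\PP(a^2,b^2,c^2)$ means that $\mathrm{Newt}(f)\subset N_\RR$ is the lattice triangle $\Delta=\mathrm{conv}\{v_1,v_2,v_3\}$ whose primitive ray generators satisfy the weighted balancing relation $a^2v_1+b^2v_2+c^2v_3=0$. Since an elementary cluster-type transformation is only defined up to a toric ($\mathrm{GL}_2(\ZZ)$) change of variables, I would first normalize coordinates so that the edge joining the $a^2$- and $b^2$-vertices sits in standard position and the $T$-singularity forced along it by the weights $a,b$ becomes visible.

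Next I would translate the change of variables $y_0=x_0\,f_0^{\pm1}$ with $f_0=x_1+\dots+x_i+1$ into an operation on $\Delta$. Grading monomials by the exponent $h$ of $x_0$, equivalently by a primitive linear form $w\in M$ (the mutation direction), the substitution multiplies the height-$h$ slice of $f$ by $(x_1+\dots+x_i+1)^{\pm h}$; the requirement that the result again be a Laurent polynomial is exactly the divisibility condition making this an admissible combinatorial mutation $\mathrm{mut}_w(\Delta,F)$, with factor $F$ the lattice simplex $\mathrm{conv}\{0,e_1,\dots,e_i\}$ coming from the linear part of $f_0$ (a primitive segment when $i=1$). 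Concretely the slices at positive height grow by Minkowski addition of $hF$ while those at negative height shrink by Minkowski subtraction of $|h|F$ (or vice versa, according to the sign in $f_0^{\pm1}$). With the normalization of the previous step I would check that the admissible such mutation fixes two of the three defining weights, namely $a^2$ and $b^2$, while modifying the part of $\Delta$ responsible for $c^2$.

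Finally I would compute the weight attached to the new vertex. Reading off the lattice widths of the mutated slices, the new balancing relation takes the form $a^2v_1'+b^2v_2'+\kappa^2v_3'=0$ for a new primitive $v_3'$, and the arithmetic identifies $\kappa=3ab-c$. This is the step where the Markov equation $a^2+b^2+c^2=3abc$ enters essentially rather than as bookkeeping, since it is precisely what forces the mutated polygon to be again the fan polytope of a weighted projective plane, of weights $a^2,b^2,(3ab-c)^2$, rather than a general Fano triangle; that $(a,b,3ab-c)$ is once more a Markov triple then shows, via Hacking--Prokhorov, that $\PP(a^2,b^2,(3ab-c)^2)$ is a genuine $\Q$-Gorenstein degeneration of $\PP^2$, confirming the answer is of the expected type. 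The main obstacle is exactly this explicit verification together with checking admissibility: one must pin down $w$ and the multiplicity of $F$ compatibly with the $T$-singularities dictated by $a,b$, and only then does the Markov relation deliver $c\mapsto 3ab-c$. As a sanity check, in the simplest instance $(a,b,c)=(1,1,1)$ the mutation sends $\mathrm{conv}\{(1,0),(0,1),(-1,-1)\}=\PP^2$ to $\mathrm{conv}\{(0,1),(-1,-1),(1,-3)\}=\PP(1,1,4)$, matching $(1,1,1)\mapsto(1,1,2)$.
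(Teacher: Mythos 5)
Your setup is sound as far as it goes: the reduction via the earlier Proposition (cluster-type images of weak LG models are weak LG models) is legitimate, your translation of the elementary transformation $y_0=x_0f_0^{\pm1}$ into a slice-wise Minkowski operation on the Newton polytope is the right picture, and your $(1,1,1)$ sanity check is correct up to the coordinate swap $(x,y)\mapsto(y,x)$. But there is a genuine gap, and it is exactly the point you flag as ``the main obstacle'' and then leave unresolved: admissibility. Whether the substitution produces a Laurent polynomial is \emph{not} a property of the Newton polytope $\Delta_f$; it is a property of the coefficients of $f$. The hypothesis hands you only the polytope (via $T_f=\PP(a^2,b^2,c^2)$) plus the period condition, so the statement you call ``purely toric'' is not purely toric: a generic Laurent polynomial with this Newton polytope admits no elementary cluster-type transformation at all (put coefficient $5$ at an interior lattice point of the long edge and no factor of $(x+1)^c$ can be extracted). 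Nothing in your plan explains why the positive-height slices of the actual $f$ are divisible by the required powers of $(x+1)$, and this divisibility is the heart of the proof, not a routine check.

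The paper closes precisely this gap with a coefficient lemma. Choosing $d\geq c$ with $3ad\equiv b\ (\mathrm{mod}\ c)$, it normalizes coordinates so that the vertices of $\Delta_f$ are $(d,c)$, $(d-c,c)$, $\bigl(-\tfrac{d(3ab-c)-b^2}{c},-(3ab-c)\bigr)$, and proves by induction that the coefficient of $f$ at the $k$-th lattice point from the end of an edge of integral length $n$ is $\binom{n}{k}$; together with vanishing of the remaining positive-height coefficients this forces
$$
f=x^{d-c}y^c(x+1)^c+\frac{1}{x^{\frac{d(3ab-c)-b^2}{c}}y^{3ab-c}}+\sum_r y^{-n_r}f_r(x),\qquad n_r\geq 0,
$$
so that $y'=y(x+1)$, $x'=x$ is manifestly admissible, and the same explicit coordinates turn the weight computation $c\mapsto 3ab-c$ into a direct verification rather than an assertion. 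To complete your argument you would need (i) an analogue of this edge-coefficient lemma, derived from the weak LG model property (this is where the period condition actually gets used, and where the induction along the Markov tree lives), and (ii) the actual arithmetic identifying the new weight, where, as you correctly guess, the Markov equation enters. Without (i), polytope mutations alone cannot prove the statement.
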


{\bf Sketch of the proof}\ (S.\,Galkin).
Consider $d\geq c$ such that $3ad=b\ (\mathrm{mod}\ c)$. One can check that we can choose toric coordinates $x$, $y$ such that
in these coordinates vertices of the Newton polytope of $f$ are $(d,c)$, $(d-c,c)$, and $(-\frac{d(3ab-c)-b^2}{c},-3ab+c)$.
Let $p$ be the $k$-th integral point from the end of an edge of integral length $n$ of the Newton polytope of $f$. Then
the coefficient of $f$ at $p$ is $\binom{n}{k}$ (this can be proved by induction).
This means that
$$
f=x^{d-c}y^c(x+1)^c+\frac{1}{x^{\frac{d(3ab-c)-b^2}{c}}y^{3ab-c}}+\sum_r y^{-n_r}f_r(x),
$$
where $n_i$'s are non-negative and $f_i$'s are some Laurent polynomials in $x$.
One can check that the change of variables of cluster type
$$
y^\prime=y(x+1),\ \ x^\prime=x
$$
sends $f$ to a weak Landau--Ginzburg model $f^\prime$ such that $T_{f^\prime}=\PP(a^2,b^2,(3ab-c)^2)$. \qed

We extend  observed connection between degenerations and birational transformations further to  a general connection between geometry of moduli space of Landau--Ginzburg models, birational and symplectic geometry. We summarize this connection in Table~\ref{tab:Section 2 tab 1} and we will investigate it (mainly conjecturally) in the sections that follow.

\begin{table}[h]
\begin{center}
\begin{tabular}{|c||c|c|}
\hline
\begin{minipage}[c]{0.5in}
\centering
\medskip

\medskip

\end{minipage}
&
\begin{minipage}[c]{2in}
\centering
\medskip

Fano variety $X$

\medskip

\end{minipage}
&
\begin{minipage}[c]{2in}
\centering
\medskip

Landau--Ginzburg model $LG(X)$

\medskip

\end{minipage}
\\\hline\hline
\begin{minipage}[c]{0.5in}
\centering
\medskip

A side

\medskip

\end{minipage}
&
\begin{minipage}[c]{2in}
\centering
\medskip

$\Fuk (X)$: symplectomorphisms
and general degenerations

\medskip

\end{minipage}
&
\begin{minipage}[c]{2in}
\centering
\medskip

$FS(LG(X))$: degenerations

\medskip

\end{minipage}
\\\hline
\begin{minipage}[c]{0.5in}
\centering
\medskip

B side

\medskip

\end{minipage}
&
\begin{minipage}[c]{2in}
\centering
\medskip

$D^b_{sing}(LG(X))$: phase changes

\medskip

\end{minipage}
&
\begin{minipage}[c]{2in}
\centering
\medskip

$D^b(X)$: birational transformations

\medskip

\end{minipage}
\\\hline
\end{tabular}
\end{center}
\caption{Wall crossings.}
\label{tab:Section 2 tab 1}
\end{table}

\section{Minkowski decompositions and cluster transformations}

\label{section:Minkowski}

\begin{defn}
Let $N\cong \ZZ^n$ be a lattice. Denote $N_\RR=N\otimes \RR$. \emph{A polytope} $\Delta \subset N_\RR$ is a convex hull of finite number
of points in $N_\RR$. A polytope is called \emph{integral} iff these points lie in $N\otimes 1$. A polytope called \emph{primitive} if it is
integral and its vertices are primitive. A Laurent polynomial is called \emph{primitive} if its Newton polytope is primitive.
\end{defn}

\begin{defn}
\emph{The Minkowski sum} $\Delta_1+\ldots+\Delta_k$ of polytopes $\Delta_1,\ldots,\Delta_k$ is the polytope $\{v_1+\ldots+v_k| v_i\in \Delta_i\}$.
An integral polytope is called \emph{irreducible} if it can't be presented as a Minkowski sum of two non-trivial integral polytopes.
\end{defn}

\begin{rem}
A Minkowski sum of integral polytopes is integral.
\end{rem}

\begin{defn}
Consider an integral polytope $\Delta\in \ZZ^n$. \emph{A Minkowski presentation} of $\Delta$ is a presentation of each of its faces as
a Minkowski sum of irreducible integral polytopes such that if a face $\Delta'$ lies in a face $\Delta$ then the
intersections of Minkowski summands for $\Delta$ with $\Delta'$ give a presentation for $\Delta'$.

Consider a Laurent polynomial $f\in\CC[\ZZ^n]$. For any face $\Delta$ of $\Delta_f$ denote the sum
 of all monomials of $f$ lying in $\Delta$ by $f_\Delta$. The polynomial $f$ is called \emph{Minkowski polynomial} if
there exists a Minkowski presentation such that, for any face $\Delta$ of $\Delta_f$ with given Minkowski sum expansion $\Delta=\Delta_1+\ldots+\Delta_k$,
there are
Laurent polynomials $f_{\Delta_i}\in \CC[\ZZ^n]$ such that the coefficients of $f_{\Delta_i}$ at vertices of $\Delta_i$ are 1's
and $f_\Delta=f_{\Delta_1}\cdot \ldots\cdot f_{\Delta_k}$.
\end{defn}

\begin{rem}
Let $e$ be an edge of a Minkowski Laurent polynomial of integral length $n$.
Its unique Minkowski expansion to irreducible summands is the expansion to $n$
segments of integral length 1.
Thus the coefficient of the monomial associated to the $i$'th integral point of $e$
(from any end) is $\binom{n}{i}$.
\end{rem}


\begin{rem}
Toric Landau--Ginzburg models of Hori--Vafa type or toric Landau--Ginzburg models from~\cite{Prz09} are Minkowski Laurent polynomials.
\end{rem}

\begin{exm}[Ilten--Vollmert construction,~\cite{IV09}]
\label{example:Ilten-Vollmert}
Consider an integral polytope $\Delta\subset N=\ZZ^n$. Let the origin of
$N$ lie strictly inside $\Delta$.
Let $X=T_\Delta$ be the toric variety whose fan is the face fan for $\Delta$. Denote the dual lattice to $N$ by $M=N^\vee$.
Put $N'=N\oplus \ZZ$, $M'=M\oplus \ZZ$.
Let $C$ be the cone generated by $(\Delta,1)$. Then $X=Proj\,\CC[C^\vee\cap M']$ with
grading given by $d=(0,1)\in M'$.
For any primitive $r\in M'$, consider the map $r\colon N'\to\ZZ$. Let
$L_r=ker(r)$. Let $s_r$ be a retract (cosection) of the inclusion $i\colon L_r\to N'$, that is, a map $N'\to L_r$ such that
$s_ri=Id_{L_r}$. It is unique up to translations along $L_r$. Let $C^+=s_r(\{p\in C|\langle p, r\rangle=1\})$, $C^-=s_r(\{p\in C|\langle p, r\rangle=-1\})$ be two ``slices'' of $C$ cut out by evaluating function at $r$.

Choose $r$ such that $r=(r_0,0)\in M'$ and such that $C^-$ is a cone
with its single vertex a lattice point. Consider a Minkowski decomposition
$C^+=C_1+C_2$ to (possibly rational) polytopes
such that  for any vertex $v$ of $C^+$, at least one of
the corresponding vertices in $C_1$ and $C_2$ is a lattice point.
Let $D$ be the cone in $L_r\oplus \ZZ$ generated by $(C^-,0)$, $(C_1,1)$, and
$(C_2,-1)$. Denote $X'=Proj\,\CC[D^\vee\cap (L_r\oplus \ZZ)^\vee)]$
where the grading is now given by $(s_r(d),0)$.
\end{exm}

\begin{prop}[Remark 1.8 and Theorem 4.4 in~\cite{IV09}]
There is an embedded degeneration of $X'$ to $X$.
\end{prop}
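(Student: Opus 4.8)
The plan is to exhibit an explicit flat one-parameter family $\mathcal{X}\to\bA^1=\Spec\,\CC[t]$ whose fiber over each $t\neq0$ is isomorphic to $X'$ and whose fiber over $t=0$ is $X$, all realized inside a common projective space so that the degeneration is embedded. The family is governed entirely by the Minkowski decomposition $C^+=C_1+C_2$: this is the combinatorial datum recording how to ``pull apart'' the slice $C^+$ of the cone $C$ defining $X$ into the two pieces $(C_1,1)$ and $(C_2,-1)$ out of which the cone $D$ defining $X'$ is assembled.

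First I would realize the total space $\mathcal{X}$ as a $T$-variety of complexity one. In the language of Altmann--Hausen polyhedral divisors on $\PP^1$, the generic member $X'$ is encoded by a divisorial fan whose polyhedral coefficients are $C_1$ over one point of $\PP^1$ and $C_2$ over another, with tailcone data coming from $C^-$, and the Minkowski condition $C^+=C_1+C_2$ is precisely the compatibility this requires. Adjoining the deformation parameter $t$ then amounts to thickening this datum over $\bA^1$: concretely I would build a cone $\widetilde D$ in $(L_r\oplus\ZZ)\oplus\ZZ$ encoding $D$ together with the extra deformation direction, graded by $(s_r(d),0)$, and take $\mathcal{X}$ to be the associated projective family over $\bA^1$.

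Next I would identify the two fibers. Over a general $t\neq0$ the slices of $\widetilde D$ reassemble the cone $D$, so the fiber is $X'$; over $t=0$ the deformation degenerates so that the two separated pieces $(C_1,1)$ and $(C_2,-1)$ recombine, via Minkowski addition $C_1+C_2=C^+$, and the cone datum collapses to the cone $C$ generated by $(\Delta,1)$, giving the special fiber $X=Proj\,\CC[C^\vee\cap M']$. The hypotheses of the construction enter decisively here: that $C^-$ is a cone with a single lattice vertex, and that for every vertex of $C^+$ at least one of the two corresponding vertices of $C_1$ and $C_2$ is a lattice point, together ensure that all the intermediate cones are generated by lattice points, so that $\mathcal{X}$ and each of its fibers carry the correct integral (rather than merely rational) toric or $T$-variety structure and the family is equivariant for the residual torus.

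The main obstacle is verifying flatness simultaneously with the scheme-theoretic identification of the special fiber as $X$ itself, rather than as a nonreduced thickening or some other toric degeneration. I would establish flatness by presenting the family through binomial relations whose specialization at $t=0$ generates exactly the toric ideal of $X$ while the generic specialization generates that of $X'$, and then by checking that each graded piece of the semigroup algebra of $\widetilde D^\vee$ is a free $\CC[t]$-module of the expected rank, so that the Hilbert polynomial stays constant along the family. It is exactly at this step that the lattice-vertex condition on the Minkowski decomposition is indispensable; supplying this verification in the polyhedral-divisor formalism is the content of Remark 1.8 and Theorem 4.4 of~\cite{IV09}, which I would invoke.
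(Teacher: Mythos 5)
The paper does not actually prove this proposition: it is imported wholesale from Ilten--Vollmert, which is why it is stated with the attribution ``Remark 1.8 and Theorem 4.4 in~\cite{IV09}'' and no argument follows it. Measured against that, your proposal is in the right genre: the polyhedral-divisor mechanism you describe (total space a complexity-one $T$-variety fibered over the deformation base, the Minkowski decomposition $C^+=C_1+C_2$ spread out over two points, general fiber assembled from $(C_1,1)$ and $(C_2,-1)$, special fiber recombining to the cone $C$ generated by $(\Delta,1)$) is exactly the mechanism of the Ilten--Vollmert proof, and your emphasis on the lattice-vertex hypotheses and on checking flatness graded piece by graded piece does identify the genuinely delicate points.

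However, as a standalone proof your proposal is circular. The step you yourself flag as the main obstacle --- flatness together with the scheme-theoretic identification of the fiber over $0$ as $X$ and of the general fiber as $X'$ --- is precisely where you write that you would invoke Remark 1.8 and Theorem 4.4 of~\cite{IV09}. That \emph{is} the statement being proved, so the invocation reduces your argument to the same bare citation the paper makes, and the surrounding construction is exposition rather than proof. To close the gap you would need to carry out that verification yourself: for instance, present the total space as $Proj$ of an explicit graded $\CC[t]$-algebra built from the enlarged cone $\widetilde D$, show that each graded piece is a free $\CC[t]$-module by exhibiting a monomial basis indexed by lattice points (this is where the hypotheses that $C^-$ has a single lattice vertex and that each vertex of $C^+$ has a lattice representative in $C_1$ or $C_2$ are used), and then compute the two specializations $t=0$ and $t\neq 0$ directly, checking as well that the grading by $(s_r(d),0)$ realizes the whole family in one ambient projective space so that the degeneration is embedded. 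Short of that, the honest version of your write-up is what the paper itself does: cite~\cite{IV09} for the result and present the rest as background.
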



\begin{exm}[Ilten]
\label{example:Ilten}
Let $\Delta\subset \ZZ^2$ be the convex hull of the points $(-1,2)$, $(1,2)$, and $(0,-1)$.
Then $X=\PP(1,1,4)$. Let $r=(0,1,0)$. Then $s_r$ is given by the matrix
$$
\left(
  \begin{array}{ccc}
    1 & 0 & 0 \\
    0 & 1 & 1 \\
  \end{array}
\right).
$$
We are in setup of Example~\ref{example:Ilten-Vollmert} (see Figure~\ref{figure:Ilten's example}). The vertex of $\{p\in C|\langle p, r\rangle=-1\}$
is $(0,-1,1)$ and goes to a vertex $(0,0)$ under $s_r$ and the vertices of $\{p\in C|\langle p, r\rangle=1\}$
are $(\pm \frac{1}{2},1,\frac{1}{2})$ and goes to vertices $(\pm \frac{1}{2},\frac{3}{2})$ under $s_r$.
That is, we have a Minkowski decomposition drawn on Figure~\ref{figure:decomposition}.

\begin{figure}[htb]
\includegraphics[width=2\nanowidth]{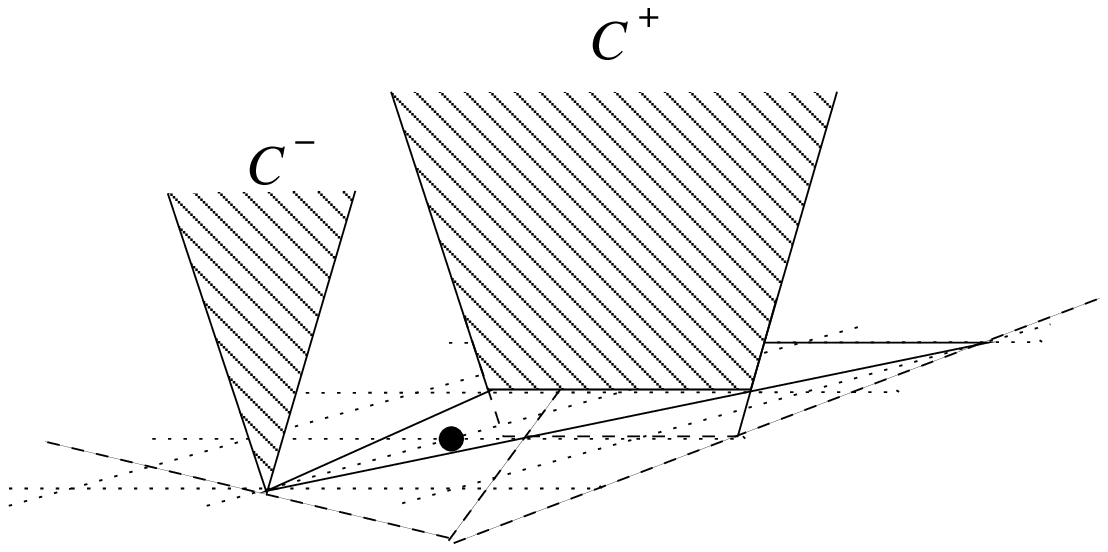}
\caption{Deformation of $\PP(1,1,4)$.}\label{figure:Ilten's example}
\end{figure}

\begin{figure}[htb]
\includegraphics[width=2.5\nanowidth]{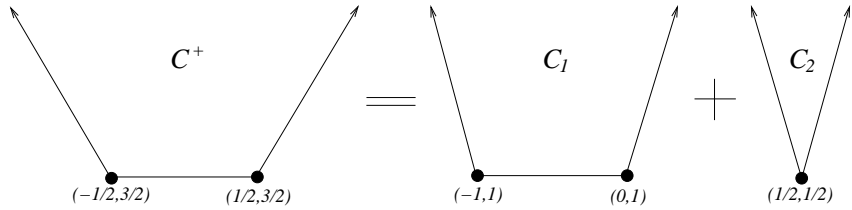}
\caption{Decomposition of $C^+$.}\label{figure:decomposition}
\end{figure}

The polytope for $X'$ is a convex hull of points $(-1,1)$, $(0,1)$, and $(1,-2)$ since the second coordinate becomes to be equal to 1 not on $(C_2,-1)$ but on $(2C_2,-2)$. Its face fan is a fan of $\PP^2$.
Thus we get a deformation of $\PP^2$ to $\PP(1,1,4)$.
\end{exm}

The following proposition shows that the degenerations given by Example~\ref{example:Ilten-Vollmert} give cluster transformations
for Minkowski polynomials.


\begin{prop}
\label{proposition:Minkowski-cluster}
Let $\Delta=\Delta_f$ be the Newton polytope of a Minkowski polynomial $f$. Let $\Delta'$ be a polytope obtained from $\Delta$
by the procedure described in Example~\ref{example:Ilten-Vollmert} given by integral Minkowski summands
agreing with the Minkowski decompositions of the faces of $\Delta$.
Then $\Delta'=\Delta_{f'}$ for some Minkowski polynomial $f'$.
\end{prop}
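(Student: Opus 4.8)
The plan is to recognize the Ilten--Vollmert degeneration of Example~\ref{example:Ilten-Vollmert} as a \emph{combinatorial mutation} of $\Delta$, to realize that mutation by an explicit cluster-type change of variables, and to check that this change of variables sends one Minkowski polynomial to another. The guiding model is the computation in the sketch of Proposition~\ref{proposition:Galkin}, where a substitution of the shape $y'=y(x+1)$ turns one Minkowski polynomial into another; I would reproduce that mechanism in the generality of Example~\ref{example:Ilten-Vollmert}.

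\medskip

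First I would set up the combinatorial dictionary. The primitive covector $r=(r_0,0)\in M'$ restricts to a width function $w=r_0\in M$ on $N$; slicing the cone $C$ over $(\Delta,1)$ at the levels $\langle\,\cdot\,,r\rangle=\pm 1$ and projecting by $s_r$ records, up to the shear $s_r$, the slices of $\Delta$ at $w$-heights $\pm 1$. Reading off the new cone $D$ generated by $(C^-,0)$, $(C_1,1)$, $(C_2,-1)$ and intersecting it with the hyperplane where the new grading equals $1$, I would verify that $\Delta'$ is obtained from $\Delta$ by the standard mutation rule attached to $w$ and a factor $F$ read off from $C_1,C_2$: the slices of $\Delta$ at $w$-heights on one side of the central slice are enlarged by the corresponding multiple of $F$ and those on the other side reduced by the same multiple, the central slice being unchanged. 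The requirement that $C_1,C_2$ be integral and compatible with the Minkowski decompositions of the faces of $\Delta$ guarantees that every slice of $\Delta'$ is again a lattice polytope and, crucially, that the faces of $\Delta'$ stand in an explicit correspondence with those of $\Delta$.

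\medskip

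Next I would construct $f'$. Grouping the monomials of $f$ by their $w$-height, write $f=\sum_h f^{(h)}$, where $f^{(h)}$ is supported on the $w$-height-$h$ slice of $\Delta$. Let $\phi$ be the Laurent polynomial with Newton polytope $F$ and unit coefficients at the vertices of $F$ (the factor supplied by the Minkowski structure); then the mutation is the cluster-type substitution $y\mapsto y\,\phi$ in a monomial coordinate $y$ adapted to $w$, of exactly the type occurring in the definition of cluster transformations and in Proposition~\ref{proposition:Galkin}. Under it each part $f^{(h)}$ is multiplied by $\phi^{h}$; on the heights where this exponent is negative the operation is a division, and the only point to check is that $\phi$ then divides $f^{(h)}$ to the required order. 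This is precisely where the Minkowski hypothesis on $f$ enters: by definition the face polynomial carried by the relevant slice factors, through its Minkowski presentation, as a product in which $F$ contributes $\phi$ with the needed multiplicity, so the division is polynomial and the resulting $f'$ has Newton polytope exactly $\Delta'$.

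\medskip

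Finally I would verify that $f'$ is a Minkowski polynomial by transporting the Minkowski presentation of $\Delta$ through the face correspondence of the first step: the decomposition of each face of $\Delta'$ is obtained from that of the matching face of $\Delta$ by deleting or inserting the appropriate copies of $F$, and the identity $f'_{\Delta'}=\prod_i f'_{\Delta'_i}$ with unit vertex coefficients follows from the corresponding factorization for $f$ together with the normalization of $\phi$. For edges this is automatic from the binomial-coefficient remark: an edge of lattice length $m$ must carry the coefficients $\binom{m}{i}$, and one checks that the substitution reproduces them. I expect the main obstacle to lie not in any single face but in the compatibility clause of the definition of a Minkowski presentation --- that the decomposition of a face restrict correctly to each of its subfaces --- which must be shown to survive the mutation \emph{simultaneously} on all faces; handling the possibly rational intermediate summands $C_1,C_2$ and confirming that they recombine into genuinely integral face decompositions of $\Delta'$ is the delicate point, and is exactly what the integrality and compatibility hypotheses on the summands are there to supply.
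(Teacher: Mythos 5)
Your proposal is essentially the paper's own argument: the paper normalizes coordinates so that $s_r$ becomes the projection onto $(x_1,\ldots,x_n)$, writes $f=f_+x_0+f_0+f_-/x_0$, invokes the Minkowski factorization $f_+=f_1f_2$ of the top face, and applies precisely your cluster substitution $x_0\mapsto x_0/f_2$, obtaining $f'=f_1x_0+f_0+\frac{f_-f_2}{x_0}$ with Newton polytope $\Delta'$. The only real difference is economy: in the paper's normalization only the three $r$-levels $+1,0,-1$ occur, so the divisibility you verify at general heights $h$ collapses to the single top-face factorization, and the Minkowski property of $f'$ is read off directly rather than via your face-by-face transport argument.
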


\begin{proof}
Let $f\in \CC[x_0^{\pm 1},\ldots,x_n^{\pm 1}]$. After toric changes of variables we can assume that $s_r$ is the projection on
coordinates $x_1,\ldots,x_n$. Then
$$f=f_+(x_1,\ldots,x_n)x_0+f_0(x_1,\ldots,x_n)+\frac{f_{-}(x_1,\ldots,x_n)}{x_0}.$$
As $f$ is a Minkowski polynomial we have $f_+=f_1f_2$. Thus after change of variables $x_0\to x_0/f_2$ we get a Minkowski
polynomial
$$f'=f_1(x_1,\ldots,x_n)x_0+f_0(x_1,\ldots,x_n)+\frac{f_{-}(x_1,\ldots,x_n)f_{2}(x_1,\ldots,x_n)}{x_0}$$
with Newton polytope $\Delta'$.
\end{proof}

\begin{rem}
Example~\ref{example:Ilten} shows that the statement of Proposition~\ref{proposition:Minkowski-cluster} holds for non-integral case as well.
This example is the first non-trivial cluster transformation given by Proposition~\ref{proposition:Galkin}.
\end{rem}

\begin{exm}
Let $\Delta$ be the convex hull of points $(-1,1)$, $(1,1)$, and $(0,-1)$. Then $X$ is a quadratic cone $\PP(1,1,2)$.
(A unique) Minkowski polynomial for $\Delta$ is
$$f=\frac{(x+1)^2y}{x}+\frac{1}{y}.$$
After cluster change of variables $y\to \frac{y}{x+1}$ we get a polynomial
$$
\frac{(x+1)y}{x}+\frac{x+1}{y}.
$$
It is (a unique) Minkowski polynomial for polytope $\Delta'$ --- the convex hull of points $(-1,-1)$, $(0,-1)$, $(1,-1)$, and $(0,-1)$.
These points generate the fan of a smooth quadric $X'$.
\end{exm}

\section{Degenerations and wall crossings}

\label{section:wall crossings}

In the previous section we have established certain combinatorial structures --- cluster transformations connected  to wall crossings.
We will relate  these  combinatorial structures   to moduli space of stability conditions. We do this in two steps:

{\bf Step 1.} First  we relate  the combinatorial structures  to  ``moduli space of Landau--Ginzburg models''.

{\bf Step 2.} Next  we describe hypothetically how the
``moduli space of Landau--Ginzburg models'' fits  in a ``twistor family'' with generic fiber the moduli space of stability conditions of a Fukaya--Seidel category.

 We  start with  step one --- collecting all Landau--Ginzburg models in a moduli space. The idea is  to record  wall crossings as relations in the mapping class group and then relations between relations and so on. This suggests a connection with Hodge theory and higher category theory. We will start with
a rather simple approach which we will enhance later in order to serve our purposes.
Nearly ten years ago it was discovered that, while the symplectic mapping class group of a curve equals the ordinary (oriented) mapping class group, these two groups differ greatly for higher dimensional symplectic manifolds. Understanding the structure of these groups has been a goal of many researchers in symplectic geometry. The initial  purpose of the construction below   was  to obtain a presentation of the symplectic mapping class group of toric hypersurfaces. Along the way we have obtained a characterization of the zero fiber of a
Stability Hodge Structure.

To explain our approach, we recall some notation and constructions. Assume $A \subset \Z^d$ is a finite set, $X_A$ the polarized toric variety associated to $A$ with ample line bundle $\mathcal{L}$. In \cite{GKZ}, the secondary polytope $\textrm{Sec} (A)$ parameterizing regular subdivisions was constructed and shown to be the Newton polytope of the $E_A$ determinant (a type of discriminant). We realize the toric variety associated to $\textrm{Sec} (A)$ as the coarse moduli space of a stack $\mathcal{X}_{\textrm{Sec}(A)}$ defined in \cite{Lafforgue}. We observe that the stack $\mathcal{X}_{\textrm{Laf}(A)}$  constructed in \cite{Lafforgue} has a proper map $\pi$ to $\mathcal{X}_{\textrm{Sec}(A)}$ whose fibers are degenerations of $X_A$, and we constructed a polytope $\textrm{Laf} (A)$ which is dual to the fan defining $\mathcal{X}_{\textrm{Laf} (A)}$. The zero set $\mathcal{H}_{\textrm{Sec}(A)}$ of a section of the associated line bundle parameterizes sections of $\mathcal{L}$ and degenerated sections are hypersurfaces in the associated degenerated toric variety. Upon restriction, we obtain a proper map $\pi : \mathcal{H}_{\textrm{Sec}(A)} \to \mathcal{X}_{\textrm{Sec}(A)}$ with non-singular fibers symplectomorphic to any non-degenerate section of $\mathcal{L}$.

Since $ \pi: \mathcal{H}_{\textrm{Sec}(A)} \to \mathcal{X}_{\textrm{Sec}(A)}$ is a proper map, we may consider symplectic parallel transport of the non-singular fibers along paths in the complement of the  zero set $Z_A$ of the $E_A$ determinant.
Denote by $H_p$ the fiber of $\pi$.
We observe that the subset of the fibers meeting the toric boundary $H$ are horizontal in the sense that if $q \in \partial H_p$ then the symplectic orthogonal $(T_q H_p)^{\perp_\omega} \subset T_q (\partial \mathcal{H}_{\textrm{Sec}(A)} )$, where $\omega$ corresponds to a restriction of Fubini--Study metric. That is, parallel transport is a symplectomorphism that preserves the boundary of the hypersurfaces.  Choosing a base point $p$ of $\mathcal{X}_{\textrm{Sec}(A)}\setminus Z_A$, we obtain a map from the based loop space $\rho :\Omega (\mathcal{X}_{\textrm{Sec}(A)} \setminus
 Z_A ) \to \textrm{Symp}^\partial ( H_p )$ and a group homomorphism
\begin{equation*} \rho_* : \pi_1 (\mathcal{X}_{\textrm{Sec}(A)}\setminus Z_A ) \to \pi_0 (\textrm{Symp}^\partial (H_p )), \end{equation*}
where $\pi_0 (\textrm{Symp}^\partial (H_p ))$ is a mapping class group.
However, from a field theory perspective, this homomorphism in imprecise; one should consider not only symplectomorphisms preserving the boundary, but also those that preserve the normal bundle of the boundary. In this way, we can glue two hypersurfaces together without creating an ambiguity in the symplectomorphism groups. We call such a symplectomorphism \emph{boundary framed morphism} and denote the corresponding group $\textrm{Symp}^{\partial,\textrm{fr}} (H_p)$. For toric hypersurfaces, this group is a central extension of $\textrm{Symp}^\partial (H_p)$. It is not generally the case, however, that parallel transport preserves the framing, but the change in framing can be controlled by keeping track of the homotopies in $\Omega (\mathcal{X}_{\textrm{Sec}(A)}\setminus Z_A)$ or by passing to the loop space of an auxiliary real torus bundle
$\mathcal{E} \to \mathcal{X}_{\textrm{Sec}(A)}\setminus Z_A$, giving a homomorphism
\begin{equation*} \tilde{\rho}_* : \pi_1 (\mathcal{E} ) \to \pi_0 (\textrm{Symp}^{\partial, \textrm{fr}} (H_p )). \end{equation*}
In many cases, this homomorphism is surjective.

The stack $\mathcal{X}_{\textrm{Sec}(A)}$ is as complicated combinatorially as the secondary polytope $\textrm{Sec} (A)$, which is computationally expensive to describe. While the Newton polytope of $E_A$ was found in \cite{GKZ}, $Z_A$ is far from smooth and there are open questions about its singular structure. We bypass these difficulties by considering only the lowest dimensional boundary strata of $\mathcal{X}_{\textrm{Sec}(A)}$ where non-trivial behavior occurs. Thus the first and main case we examine are the one dimensional boundary strata of $\mathcal{X}_{\textrm{Sec}(A)}$. Combinatorially, these are known as circuits.

\emph{A circuit $A$} is a collection of $d + 2$ points in $\Z^d$, such that there are exactly two coherent triangulations (see~\cite{GKZ}) of $A$, so the secondary polytope is a line segment and the secondary stack a weighted projective line $\mathbb{P} (a, b)$. $Z_A$ is either two or three points;  two of the points are the equivariant orbifold points $\{0, \infty\}$ and the possible third is an interior point. Both the constants, $a, b$ and the number of points in $Z_A$ depends on the convex hull and affine positioning of $A$ --- for more details see
\cite{DKK}.
When $Z_A$ consists of three points, their complement retracts  onto a  figure eight and the fundamental  group is free on two letters. In  this case, we have the based loops $\delta_1, \delta_2,  \delta_3 = \delta_2^{-1} \delta_1^{-1}$ encircling the three points. The symplectic monodromy  $T_i = \tilde{\rho_*} (\delta_i )$ is computable from known results in symplectic geometry as either spherical
Dehn twists  or as twists about a tropical decomposition. The image via $\tilde{\rho}_*$ gives the relation
\begin{equation} \label{eq:circ1} T_1 T_2 T_3 = T_{\partial H_p}, \end{equation}
where $T_{\partial H_p}$ is the central element determined by twisting the framing about the toric boundary. One of the most elementary examples is $X_A = \mathbb{P}^1 \times \mathbb{P}^1$ with polarization $\mathcal{O} (1,1)$, and the circuit is the four vertices of a unit square with the two diagonal triangulations. Here the hypersurface is $\mathbb{P}^1$ with four boundary points and the relation obtained above yields a classical relation in the mapping class group called the \emph{Lantern relation}.

When $Z_A$ consists of two points, one is an orbifold point and the other is a point with trivial stabilizer. If $\delta_1, \delta_2$ are
based paths encircling $Z_A$ and $T_1, T_2$ are the associated symplectomorphisms, we obtain a relation
\begin{equation} \label{eq:circ2} (T_1 T_2 )^a = T_{\partial H_p}. \end{equation}
A basic example of this relation arises as the homological mirror to $\mathbb{P}^2$ which is the set $A = \{(0, 0), (1, 0), (0, 1), (-1, -1)\}$. The constant $a$ occurring above is $3$ and the relation is in fact another classical mapping class group relation known as the \emph{star relation}.

We call the boundary framed, symplectic mapping class group relation occurring in equations \ref{eq:circ1} and \ref{eq:circ2} \emph{the circuit relation}. In general, any complex line in $\mathcal{X}_{\textrm{Sec}(A)}$ yields a relation in $\textrm{Symp}^{\partial, \textrm{fr}} (H_p)$ by homotoping the product of all the loops around the intersections with $Z_A$ to the identity. However, each such line can be degenerated to a chain of equivariant lines which are precisely circuits supported on $A$. Thus every relation obtained this way can be thought of as arising from a composition of circuit relations. As we saw in the previous two  sections Landau--Ginzburg mirrors of Fano manifolds  are fibrations of Calabi--Yau hypersurfaces. Therefore the above simple examples generalize to

\begin{thm}[\cite{DKK}]  Landau--Ginzburg mirrors of   Fano manifolds can be obtained  by a  superposition of circuits described above.
\end{thm}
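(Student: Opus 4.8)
The plan is to reduce the global geometry of the Landau--Ginzburg mirror to the two local circuit relations~\eqref{eq:circ1} and~\eqref{eq:circ2} by exploiting the toric combinatorics of the secondary stack. As recalled above, the mirror of a Fano $X$ is a fibration whose general fibre is a Calabi--Yau hypersurface $H_p$, realized as a non-degenerate section of the ample bundle $\mathcal{L}$ on the toric variety $X_A$ attached to a configuration $A\subset\ZZ^d$. The crucial bridge is that the monodromy of this Landau--Ginzburg fibration is precisely the framed symplectic monodromy representation
$$\tilde\rho_*\colon \pi_1(\mathcal{E})\longrightarrow \pi_0\bigl(\textrm{Symp}^{\partial,\textrm{fr}}(H_p)\bigr)$$
over the complement of the $E_A$-discriminant $Z_A$ in $\mathcal{X}_{\textrm{Sec}(A)}$. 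So ``obtaining the mirror'' becomes the problem of generating this representation, together with all of its relations, from circuits and then reassembling the pieces using the boundary framing.

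Second, I would reduce everything to complex lines in $\mathcal{X}_{\textrm{Sec}(A)}$. Any line $\ell\cong\PP^1$ meets $Z_A$ in finitely many points $q_1,\dots,q_k$, and since the product of the loops encircling all punctures of a sphere is null-homotopic, applying $\tilde\rho_*$ to the relation $\gamma_1\cdots\gamma_k=1$ in $\pi_1(\ell\setminus Z_A)$ yields a single relation among the corresponding symplectomorphisms in the framed mapping class group. Because $\mathcal{X}_{\textrm{Sec}(A)}$ is a projective toric stack, such lines sweep out the base and the loops they carry generate $\pi_1$ of the discriminant complement; hence the line relations present the entire monodromy representation, and it remains to identify each of them with a composition of circuit relations.

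Third, and this is the heart of the argument, I would degenerate each line to a chain of torus-invariant lines. Using the torus action on $\mathcal{X}_{\textrm{Sec}(A)}$, a generic $\ell$ specializes, inside the space of $1$-cycles, to a connected chain $\ell_1\cup\cdots\cup\ell_m$ of equivariant $\PP^1$'s; by the defining property of the secondary polytope these invariant lines are exactly the one-dimensional strata, i.e. the circuits supported on $A$, each a weighted projective line $\PP(a,b)$ meeting $Z_A$ in two or three points. Under this specialization the relation carried by $\ell$ breaks up as the ordered composition of the circuit relations carried by the $\ell_i$, each being an instance of~\eqref{eq:circ1} or~\eqref{eq:circ2} (the $\PP^1\times\PP^1$ lantern and $\PP^2$ star relations being the simplest cases). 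Reassembling these circuit fibrations along the chain, with framings matched by passing to $\mathcal{E}$, yields the Landau--Ginzburg mirror as a superposition of circuits.

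The main obstacle is the compatibility of the framed symplectic monodromy with the toric degeneration in the third step: one must show that the Picard--Lefschetz data (the spherical Dehn twists, or twists about a tropical decomposition) and, above all, the central framing corrections $T_{\partial H_p}$ add up so that the generic-line relation factors exactly into the invariant-line relations with no residual central term. This amounts to controlling symplectic parallel transport as the base line acquires a node and to tracking the framing through the auxiliary torus bundle $\mathcal{E}$ --- precisely the role played by the extension from $\textrm{Symp}^{\partial}$ to $\textrm{Symp}^{\partial,\textrm{fr}}$, which is what makes the circuit pieces glue without ambiguity.
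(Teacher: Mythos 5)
Your proposal follows essentially the same route as the paper's own justification: the Landau--Ginzburg mirror is viewed as a pencil (line) in the secondary stack $\mathcal{X}_{\textrm{Sec}(A)}$, the line's monodromy relation in $\textrm{Symp}^{\partial,\textrm{fr}}(H_p)$ is obtained by homotoping the product of loops around its intersections with $Z_A$ to the identity, and the line is then degenerated to a chain of equivariant lines, which are exactly the circuits, so the relation factors into circuit relations. Your added discussion of framing compatibility through the degeneration is a refinement of, not a departure from, the argument sketched in the paper (whose full details are deferred to \cite{DKK}).
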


Interpreting  Landau--Ginzburg  models as lines in the secondary stack we get

\begin{thm}[\cite{DKK}]  $\mathcal{X}_{\textrm{Sec}(A)}$ can be seen as moduli space of  Landau--Ginzburg models. In particular  some wall crossings correspond to passing through   $Z_A$.

\end{thm}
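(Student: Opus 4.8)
The plan is to build an explicit dictionary between the geometry of the secondary stack and the collection of Landau--Ginzburg models supported on $A$, and then to match the discriminant $Z_A$ with the walls arising in the cluster picture of Sections~\ref{section:clusters} and~\ref{section:Minkowski}. First I would recall from \cite{GKZ} that the closed points of the coarse space underlying $\mathcal{X}_{\textrm{Sec}(A)}$ correspond to regular (coherent) subdivisions of $A$, with the vertices of $\textrm{Sec}(A)$ giving the coherent triangulations. Each such subdivision determines a toric degeneration of $X_A$ and hence, via the Newton polytope $\textrm{conv}(A)$, a Laurent polynomial whose monomials are supported on $A$ and whose coefficients are dictated by the Minkowski data of the subdivision. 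This is where Section~\ref{section:Minkowski} enters: by Proposition~\ref{proposition:Minkowski-cluster} the integral Minkowski summands attached to the faces produce exactly the Minkowski polynomials, so the assignment sending a point of $\mathcal{X}_{\textrm{Sec}(A)}$ to its Laurent polynomial lands in the class of weak Landau--Ginzburg models. I would organize this as the already-introduced family $\pi\colon\mathcal{H}_{\textrm{Sec}(A)}\to\mathcal{X}_{\textrm{Sec}(A)}$, whose nonsingular fibres are the Calabi--Yau hypersurfaces cut out by sections of $\mathcal{L}$ and are mutually symplectomorphic over the complement of $Z_A$.

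Next I would make precise the slogan that a single Landau--Ginzburg model is a \emph{line} in $\mathcal{X}_{\textrm{Sec}(A)}$. Since the mirror of a Fano is a fibration of Calabi--Yau hypersurfaces, the base of this fibration maps to the secondary stack and, after degeneration, to a chain of equivariant lines, that is, to circuits supported on $A$. Invoking the preceding theorem --- that Landau--Ginzburg mirrors of Fano manifolds are obtained by superposition of circuits --- I would conclude that the entire moduli problem is governed by these one-dimensional strata, so that $\mathcal{X}_{\textrm{Sec}(A)}$ simultaneously parameterizes all toric degenerations, and with them all weak Landau--Ginzburg models related by the cluster transformations of Proposition~\ref{proposition:Minkowski-cluster}. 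This is the first assertion.

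For the second assertion I would identify crossing $Z_A$ with a wall crossing. By construction $Z_A$ is the zero locus of the $E_A$ determinant, which vanishes precisely when the corresponding section of $\mathcal{L}$ acquires a singularity, equivalently when the fibre of $\pi$ degenerates. Transporting a nonsingular fibre along a loop $\delta_i$ encircling a point of $Z_A$ yields the symplectic monodromy $T_i=\tilde\rho_*(\delta_i)$, realized as a spherical Dehn twist or a twist about a tropical decomposition, and the circuit relations~\eqref{eq:circ1} and~\eqref{eq:circ2} record exactly these crossings. Matching the three-point configuration (the lantern relation) and the two-point configuration (the star relation) with the corresponding shapes of $Z_A$ then exhibits each elementary cluster transformation as the passage through a point of $Z_A$.

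The hard part, I expect, is the definition of the moduli functor itself: one must specify in which category and under which equivalence the ``Landau--Ginzburg models'' are being parameterized, so that the set-theoretic dictionary above upgrades to an isomorphism of stacky moduli spaces. The genuine obstruction is the singular structure of $Z_A$, noted in the text to be far from smooth, together with the failure of parallel transport to preserve the boundary framing; handling this is what forces the passage to the auxiliary torus bundle $\mathcal{E}\to\mathcal{X}_{\textrm{Sec}(A)}\setminus Z_A$ and the homomorphism $\tilde\rho_*$, and one must then glue the local circuit relations consistently across the higher-codimension strata. Proving that this gluing is unobstructed --- so that the global stack truly is the moduli space rather than merely a base carrying a family --- is where the essential difficulty lies.
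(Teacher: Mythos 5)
The paper does not actually prove this theorem: it is quoted from \cite{DKK} (in preparation), and the only argument in the text is the preceding construction of $\mathcal{X}_{\textrm{Sec}(A)}$, the family $\pi\colon\mathcal{H}_{\textrm{Sec}(A)}\to\mathcal{X}_{\textrm{Sec}(A)}$, the circuit relations, and the remark that Landau--Ginzburg models are lines in the secondary stack --- which is exactly the dictionary your proposal reconstructs. Your reading (points of $\mathcal{X}_{\textrm{Sec}(A)}$ as regular subdivisions/toric degenerations, LG models as lines degenerating to chains of circuits, crossing $Z_A$ as symplectic monodromy and hence wall crossing) matches the paper's intended approach, and your final paragraph correctly identifies that the substantive content --- the precise moduli functor and the gluing across strata of $Z_A$ --- is precisely what this paper defers to \cite{DKK} rather than proving.
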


These two theorems complete Step 1.

\section{Wall crossings and Stability Hodge Structures}

\label{section:SHS}

We move to Step 2. building a ``twistor family'' with generic fiber the moduli space of stability conditions for  Fukaya--Seidel categories --- see \cite{AKO}.

{\bf Stability Hodge Structures. }
We start with Stability Hodge Structures, an artifact of  Donaldson--Thomas (DT) invariants. We will mainly consider Fukaya--Seidel categories but discussion in this section applies in  general.

The theory of Donaldson--Thomas invariants  and  wall crossing has become a central subject of Geometry and Physics. In a nutshell DT invariants are virtual numbers of stable objects in three dimensional Calabi--Yau category.
 Kontsevich and Soibelman  suggested Donaldson--Thomas invariants applicable to  triangulated category and Bridgeland stability conditions --- a refined version of so called \emph{motivic Donaldson--Thomas invariants} ---  MDT.
The wall crossing formulae (WCF) of MDT are expressed in terms of
factorization of quantum torus.
A  connection with nonabelian Hodge structures  comes naturally  here.
WCF for the Hitchin system  is connected  to ODE with small parameter and its asymptotic behavior. In fact the WCF relates  to Stokes data at infinity for this ODE and connects  with the   work of Ecalle and Voros on resurgence.

We will introduce a  new geometric structure which seems to be present in many of  above considerations  --- Stability Hodge Structures. These structures seem to have a huge potential of geometric applications some of which we discuss.

The moduli space of stability conditions of a category $C$   is very complicated with possibly fractional boundary. In the case of derived category of Calabi--Yau manifolds of dimension three and higher there is not any hypothetical description.
Still HMS predicts that the moduli space of mirror dual Calabi--Yau manifold
is embedded in locally closed cone in  moduli space of stability conditions of a category $C$. So it is a big open question how to characterize Hodge structures corresponding to mirror duals. Classically the moduli space of pure Hodge structures has a compactification by Mixed Hodge Structures (MHS). So it is natural to study limiting Donaldson--Thomas invariants and relate to WCF.

In the case of three-dimensional Calabi--Yau manifolds there are different types of MHS.
 The cusp case --- the deepest degeneration --- corresponds to a $t$-structures which is an extension of Tate motives. As a result we take a generating series of Donaldson--Thomas rank one torsion free invariants. It is expected that
in this case this generating series  (modulo  change of coordinates)  is the classical Gromov--Witten series which satisfies holomorphic anomaly equation. This translates into automorphic property for DT generating function. We expect that   automorphic property holds for higher ranks and plan to study it and show that WCF is necessary to assemble  limiting data.

A different MHS corresponds to conifold points and non maximal degeneration points. The wall crossings and DT data give a family of Integrable Systems in the following way. The vanishing   cycles $\Gamma_{short}$ and the monodromy define
a quotient category $\mathcal{T}/{\mathcal{A}}$ with the following sequence
on level of $K$--theory:
$$\Gamma_{short}\to K_0({\mathcal{T}})\to K_0({\mathcal{T}}/{\mathcal{A}}).$$

Using the Kontsevich--Soibelman noncommutative torus approach
we define a superscheme 
$$\mathbb G=\oplus_{p\in \Gamma_{short}}\mathbb G_p\to T_{non}.$$
Consider the zero grade $\mathbb G_0$ of
$\mathbb G$ over  $\Z$.
The global sections of 
$\mathbb G_0$
define Betti moduli space
 --- an integrable system 
$$\Gamma (G_0)=\oplus \mathcal O(M_j).$$

In order to consider the interaction with the rest of the category we include global WCF. In this case we obtain a torus action, which  produces a stack over  Betti moduli space:
$$X/(\CC^{*})^{\times n}\to M_1\times M_2\times \ldots \times M_k.$$

All these stacks fit in a  constructible sheaf.

To summarize we give a provisional definition, which covers the cases
of Bridgeland, geometric (volume forms), and generalized (log forms) stability conditions:

\begin{defn}

  \emph{Stability Hodge Structure (SHS)} for Fukaya--Seidel category $\mathcal F$ is the following data:

\begin{itemize}
  \item[i)] The moduli space of stability conditions $S$ for $\mathcal F$.
  \item[ii)] Divisor  $D$ at infinity giving a partial compactification of $S$ and  parametrizing the degenerated limiting stability conditions --- stability conditions for quotient categories, the category factored by the objects (vanishing cycles) on which stability conditions vanish.
  \item[iii)] Besides the degeneration we record the WCF --- all recorded together.
Over each point of $D$ we put Betti moduli space locally produced by WCF.
All these moduli space fit in a constructible sheaf over $S$.
\end{itemize}

\end{defn}

Let us illustrate these structures through two  examples.
We start with the category $\widetilde{\mathbb A}_2$ --- the Fukaya category of the conic bundle
$\{uv=y^2-x^3-a x-b\}$, $a,b\in \CC$. In this case, the Stability Hodge Structure is a
sheaf over $\CC^2$ with coordinates $a,b$.

\begin{figure}[htb]
\includegraphics[width=\nanowidth]{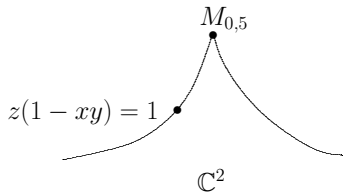}
\caption{Compactification of moduli space of stability conditions for a category $\widetilde{\mathbb A}_2$.}\label{figure:compactification A2}
\end{figure}

The points of the discriminant parameterize limiting stability
conditions. The fibers are Betti moduli spaces of vanishing cycles
which generically over the discriminant are the affine surface
$z(1-xy)=1$.  The special fiber over the cusp is the moduli space
$M_{0,5}$ of rank two bundles over projective line with one irregular
singularity and five Stokes directions at infinity (see Figure~\ref{figure:compactification A2}).

A different example is the  Fukaya--Seidel category  $\mathbb A_4$.
 We start with  a generic polynomial $p\in \CC[z]$ of
degree 5. It defines a Riemann surface $C=\{p(z)=w\}$ and 5:1-covering $\varphi \colon C\to \CC$. The ramification locus for $\varphi$ are 4 points $p_1,\ldots,p_4$---
roots of $p'$. Consider 4 paths $l_1,\ldots, l_4$ from $p_i$'s to infinity. The polynomial $p$ is generic, so the ramification is as simple as it can be and $\varphi^{-1}(l_i)$ are thimbles covering $l_i$'s 2:1. They generate a Fukaya category for $C$ and correspond to vertices of $\mathbb A_4$
quiver. ``Neighbor'' thimbles intersect at infinity: $i$-th one intersects $(i+1)$-th at one point. These intersections correspond to arrows between vertices in the quiver.

In this example the divisor $D$  at infinity parameterizes the semiorthogonal decompositions of the $\mathbb A_4$ category. The fibers of the constructible sheaf are moduli spaces of stability conditions for  $\mathbb A_3 \times  \mathbb A_1 $ categories.  Similarly on the singular points
of $D $ we get as fibers moduli spaces of stability conditions for  $\mathbb A_2 \times \mathbb A_2$ categories. This leads to a rich mixed Hodge theory structure associated with $D$ and monodromy action around it. In the next section we will see that in limit stability conditions behave as coverings so the above picture fits. This monodromy relates to the wall-crossings changes. In particular it sends the preferred set of thimbles generating the  $\mathbb A_4$ category from a generator consisting of the sum of $4$   thimbles
$G = L_1+L_2+L_3+L_4$  (with $Hom(L_i,L_{i+1})$  of rank 1) to $G' = L' + L_1 + L_3+ L_4$ by a mutation.  This mutation reduces
the generation time (see Section~\ref{section:spectra}) from $t(G)=3$ to $t(G')=2$.
We will represent is as an invariant of
of  Stability Hodge Structures in  Section~\ref{section:spectra}.

In the next section we build a twistor type of family where the generic fiber is a SHS.

\section{Higgs bundles and stability conditions --- analogy}

\label{section:Higgs bundles}

In this section we proceed describing the analogy  between
Nonabelian and Stability Hodge Structures.
We build the ``twistor'' family
so that the fiber over zero is the ``moduli space''  of Landau--Ginzburg models and the generic fiber is the Stability Hodge Structure defined above.

Noncommutative Hodge theory endows the cohomology groups of a dg-category with additional linear data --- the noncommutative Hodge structure --- which
records important information about the geometry of the
category. However, due to their linear nature, noncommutative Hodge
structures are not sophisticated enough to codify the full
geometric information hidden in a dg-category. In view of the
homological complexity of such categories it is clear that only a
subtler non-linear Hodge theoretic entity can adequately capture the
salient features of such categorical or noncommutative geometries. In this section by analogy with ``classical nonabelian Hodge theory''  we  construct and  study from such an prospective a new type of entity of exactly such
type --- the Stability Hodge Structure associated with a dg category.

As the name suggests, the SHS of a category is related to the
Bridgeland stabilities on this category.  The moduli space ${\sf
  Stab}_{C}$ of stability conditions of a triangulated dg-category
$C$ is, in general, a complicated curved space, possibly with
fractal boundary. In the special case when $C$ is the Fukaya
category of a Calabi--Yau threefold, the space ${\sf Stab}_{C}$ admits
a natural one-parameter specialization to a much simpler space ${\sf
  S}_{0}$. Indeed, HMS predicts that the moduli space of complex
structures on the  Calabi--Yau threefold maps to a Lagrangian
subvariety ${\sf Stab}^{\text{geom}}_{C} \subset {\sf Stab}_{C}$.
(Recall the holomorphic volume form and integrating it defines a stability condition and its charges,) The
idea is now to linearize ${\sf Stab}_{C}$ along ${\sf
  Stab}^{\text{geom}}_{C}$, i.e. to replace ${\sf Stab}_{C}$ with a
certain discrete quotient ${\sf S}_{0}$ of the total space of the
normal bundle of ${\sf Stab}^{\text{geom}}_{C}$ in ${\sf
  Stab}_{C}$. Specifically, by scaling the differentials and higher products  in $C$, one
obtains a one parameter family of categories $C_{\lambda}$ with
$\lambda \in \mathbb{C}^{*}$, and an
associated family ${\sf S}_{\lambda} := {\sf Stab}_{C_{\lambda}}$,
$\lambda \in \mathbb{C}^{*}$ of moduli of stabilities. Using
holomorphic sections with prescribed asymptotic at zero one can
complete the family $\{{\sf S}_{\lambda}\}_{\lambda \in
  \mathbb{C}^{*}}$ to a family ${\sf S} \to \mathbb{C}$ which in
a neighborhood of ${\sf Stab}^{\text{geom}}_{C}$ behaves like a
standard deformation to the normal cone. The space ${\sf S}_{0}$ is
the fiber at $0$ of this completed family and conjecturally ${\sf S}
\to \mathbb{C}$ is one chart of a twistor-like family $\mathcal{S} \to
\mathbb{P}^{1}$ which is by definition \emph{ the Stability Hodge
  Structure associated with $C$}.

Stability Hodge Structures are expected to exist for more general dg-categories, in particular for Fukaya--Seidel categories associated with
a superpotential on a Calabi--Yau space or with categories of
representations of quivers.  Moreover, for special non-compact
Calabi--Yau 3-folds, the zero fiber ${\sf S}_{0}$ of a Stability Hodge
Structure can be identified with the Dolbeault realization of a
nonabelian Hodge structure of an algebraic curve. This is an
unexpected and direct connection with Simpson's nonabelian Hodge theory which we exploit further suggesting some geometric applications.

 We briefly recall nonabelian Hodge theory settings. According to Simpson
we have one parametric twistor family such that the fiber over zero is the
moduli space of Higgs bundles and the generic fiber is the moduli space of
representation of the fundamental group --- $M_{Betti}$. 

In this section we state that we expect similar behavior of moduli space of Stability conditions. In other moduli space of stability conditions of Fukaya--Seidel category can be included in   one parametric twistor family, and we describe  fiber over zero in details in the next subsection.


We give an example:

\begin{exm}[twistor family for Stability Hodge Structures for the category $\mathbb A_n$]

We will give a brief explanation  the calculation of the
twistor family for the SHS for the category  $\mathbb A_n$. 
We start with the moduli space of stability conditions for
the category  $\mathbb A_n$, which can be identified with
differentials
$ e^{p}dz, $
where $p\in \CC[z]$ is a generic polynomial of degree $n+1$, see~\cite{KKPS}.

Let us denote one  holomorphic form  $ e^{p}dz $ by $Vol$. Locally there exist a holomorphic coordinate $w$  such that $Vol=dw.$  Geodesics in the metric $|Vol|^2$  are the straight real lines in coordinate $w$, the same as real lines on which $Vol$ has constant phase. Therefore  they are special Lagrangians for $Vol$  (and in fact for any real symplectic structure).

Observe that this geodesics  are asymptotic to infinity because the integral of
$|Vol|=e^{Re(p)}|dz|$  absolutely converges on them
 hence $Re(p)$ approaches infinity, as these lines are
noncompact in the uncompactified plane  $z$, and therefore  $|z|$ goes to
 infinity. To compensate infinite length in the usual metric $|dz|^2$  we use the fact that  $e^{Re(p)}$ converges  to zero  iff $Re(p)$  converges  to minus  infinity.

\end{exm}

So  after completion in the metric defined above (so the vertices are in the finite part now)  we enhance the polygon by assigning  angles and lengths. These enhanced polygons record our stability conditions. Indeed we have $(2(n+1)-3)$-dimensional space of polygons plus one global angle --- it is a real $2n$ dimensional space.  In Example~\ref{example:A2} we give a simple example the polygons for the category  $\mathbb A_2$ and a wall crossing phenomenon. The stable objects correspond to edges and diagonals. In the picture in the example we lose one stable object while crossing a wall.

\begin{exm}[stability for $\mathbb A_2$]
\label{example:A2}
For $\mathbb A_2$ category we have $\deg p=3$.
The left part of Figure~\ref{figure:A2} represents two of the stable objects for the $\mathbb A_2$ category. The third stable object is the third
edge of the triangle. The wall crossing makes the angle between the first two edges bigger then $\pi$ and as a result the third edge is not a stable object any more.

\begin{figure}[htb]
$$
\xymatrix{\includegraphics[width=0.6\nanowidth]{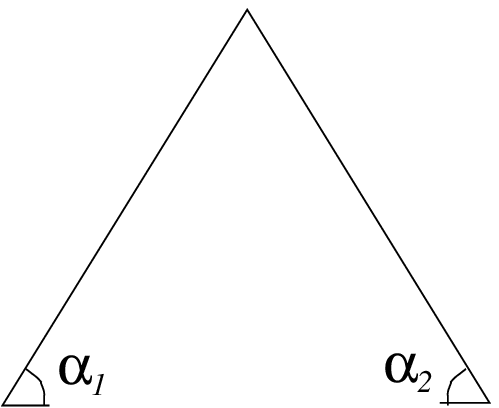}\ar[r] & \includegraphics[width=0.25\nanowidth]{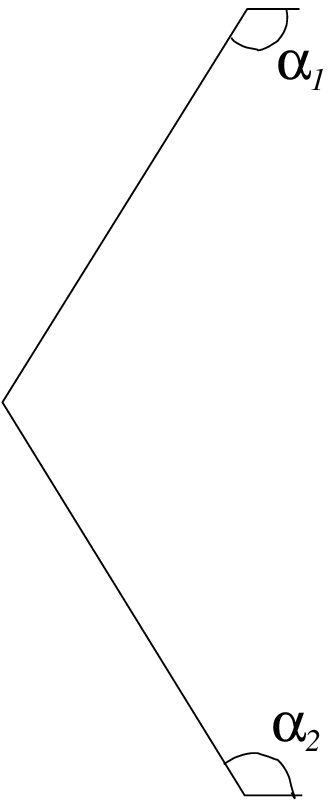}}
$$
\caption{Stability conditions for the $\mathbb A_2$ category.}\label{figure:A2}
\end{figure}

\end{exm}

Now we consider the ``twistor family'' --- the limit
$ e^{p(z)/u}dz, $
where $u$ is a complex number tending to 0.
Geometrically limit differential can be identified with graphs --- see Example~\ref{example:limit}.

\begin{exm}[limit $e^{p/u}dz$]
\label{example:limit}
Take a limit of $e^{p/u}dz$ with $u$ tending to zero.
The limits of polygons are graphs. We record the length, angle, and monodromy and this  defines a covering of the complex plane.
Thus this construction identifies a limit of moduli space of stability conditions for $\mathbb A_n$ category with some Hurwitz subspace ---
a subscheme of coverings.
In particular, these two spaces have the same number of components.
Figure~\ref{figure:limit} represents a procedure of associating the monodromy of the covering to the vertices of the graph.

\begin{figure}[htb]
\includegraphics[width=1.5\nanowidth]{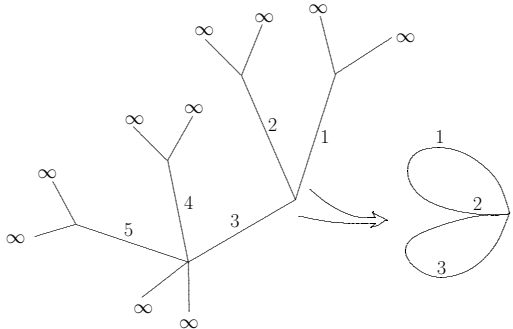}
\caption{Building coverings out of limit.}\label{figure:limit}
\end{figure}

\end{exm}



\begin{rem} Similarly one can compute the twistor family for the equivariant  $\mathbb A_n$ category and see appearance of gaps in spectra in connection with
the weight filtration of completions of special  local rings --- see Section~\ref{section:spectra}. Observe that idea of coverings brings the Fukaya category of a Riemann surface of genus $g$ very close to the    $\mathbb A_{2g+1}$ category. Also product of Fukaya categories of curves in combined  with Luttinger surgeries gives many opportunity for stability conditions with many components as well as many possibilities for the behavior of gaps and spectra.
The interplay between coverings and stability conditions suggests that one can have symplectic manifolds with the same Fukaya categories but different moduli of stability conditions. We conjecture that  the moduli spaces of coverings  obtained near different cusps being different algebraically should imply that this different manifolds are nonsymplectomorphic.

\end{rem}

{\bf The fiber over zero. }
\label{subsection:fiber over 0}
The fiber over zero (described in what follows)   plays an analogous role to the moduli space of Higgs bundles in Simpson's  twistor family in the theory of nonabelian Hodge structures.  Constructing it  amounts to a repetition of our construction in Section~\ref{section:wall crossings} from a new perspective and enhanced with more structure.

The     $\mathbb A_n$ example considered above is a simple example of more general Fukaya -Seidel categories that arise in Homological Mirror Symmetry.
Stability conditions associated to the Fukaya - Seidel category
are closely related to the complex deformation parameters, i.e. the
moduli space of Landau Ginzburg models. We begin by recalling the
general setup in the case of  Landau-Ginzburg models.
 The prescription  given by Batyrev, Borisov, Hori, Vafa in \cite{BB}, \cite{hori-vafa}
to obtain homological mirrors for toric Fano varieties is
perfectly explicit and provides a reasonably large set of examples
to examine. We recall that if $\Sigma$ is a fan in $\R^n$ for a
toric Fano variety $X_\Sigma$, then the homological mirror to the
B model of $X_\Sigma$ is a Landau--Ginzburg model $w : (\C^*)^n \to \C$ where
the Newton polytope $Q$ of $w$ is the convex hull of generators of rays
of $\Sigma$. In fact, we may consider the domain $(\C^*)^n$ to
occur as the dense orbit of a toric variety $X_A$, where $A$ is $Q
\cap \Z^n$ and $X_A$ indicates the polytope toric construction. In
this setting, the function $w$ occurs as a pencil $V_w \subset H^0
(X_A, L_A)$ with fiber at infinity equal to the toric boundary of
$X_A$. Similar construction works for generic non-toric Fanos.
In this paper we work with  directed Fukaya category associated to the
superpotential $w$ --- Fukaya--Seidel categories.   To build on the
discussion above, we discuss here Fukaya--Seidel  categories in the
context of stability conditions. The fiber over zero corresponds to the moduli of complex
structures. If $X_A$ is toric, the space of complex structures on
it is trivial, so the complex moduli appearing here are a result
of the choice of fiber $H \subset X_A$ and the choice of pencil
$w$ respectively. The appropriate stack parameterizing the choice
of fiber contains the quotient $[U/(\CC^*)^n]$ as an open dense
subset where $U$ is the open subset of $H^0 (X_A, L_A)$ consisting
of those sections whose hypersurfaces are nondegenerate (i.e.
smooth and transversely intersecting the toric boundary) and
$(\C^*)^n$ is acts by its action on $X_A$. To produce a reasonably
well-behaved compactification of this stack, we borrow from the
works of Alexeev (\cite{Al02}), Gelfand, Kapranov, and Zelevinsky (\cite{GKZ}), and
Lafforgue (\cite{Lafforgue}) to
construct the stack $\mathcal{X}_{\textrm{Sec} (A)}$ with universal
hypersurface stack $\mathcal{X}_{Laf (A)}$. We quote the following
theorem which describes much of the qualitative behavior of these
stacks:

\begin{thm}[\cite{DKK}]

\begin{itemize}
  \item[i)] The stack $\mathcal{X}_{\textrm{Sec} (A)}$ is a toric stack with moment polytope equal to the secondary polytope $Sec (A)$ of $A$.
  \item[ii)] The stack $\mathcal{X}_{Laf (A)}$ is a toric stack with moment polytope equal to the Minkowski sum $Sec (A) + \Delta_A$ where $\Delta_A$ is the standard simplex in $\R^A$.
  \item[iii)] Given any toric degeneration $F: Y \to \C$ of the pair $(X_A,
H)$, there exists a unique map $f : \C \to \mathcal{X}_{\textrm{Sec} (A)}$
such that $F$ is the pullback of $\mathcal{X}_{Laf (A)}$.
\end{itemize}

\end{thm}

We note that in the theorem above, the stacks $\mathcal{X}_{Laf
(A)}$ and $\mathcal{X}_{\textrm{Sec} (A)}$ carry additional equivariant
line bundles that have not been examined extensively in existing
literature, but are of great geometric significance. The stack
$\mathcal{X}_{\textrm{Sec} (A)}$ is a moduli stack for toric degenerations
of toric hypersurfaces $H \subset X_A$. There is a hypersurface
$\mathcal{E}_A \subset \mathcal{X}_{\textrm{Sec} (A)}$ which parameterizes
all degenerate hypersurfaces. For the Fukaya category of
hypersurfaces in $X_A$, the complement $\mathcal{X}_{\textrm{Sec} (A)}\setminus \mathcal{E}_A$ plays the role of the classical stability
conditions, while including $\mathcal{E}_A$ incorporates the
compactified version where MHS come into effect. We predict that
the walls of the stability conditions occurring in this setup are
seen as components of the tropical amoeba defined by the principal
$A$-determinant $E_A$.

To find the stability conditions associated to the directed Fukaya
category of $(X_A, w)$, one needs to identify the complex
deformation parameters  associated to this model. In fact, these are precisely
described as the coefficients of the superpotential, or in our
setup, the pencil $V_w \subset H^0 (X_A , w)$. Noticing that the
toric boundary is also a toric degeneration of the hypersurface,
we have that the pencil $V_w$ is nothing other than a map from
$\mathbb{P}^1$ to $\mathcal{X}_{\textrm{Sec} (A)}$ with prescribed point at
infinity. If we decorate $\mathbb{P}^1$ with markings at the
critical values of $w$ and $\infty$, then we can observe such a
map as an element of $\mathcal{M}_{0, Vol (Q) + 1}
(\mathcal{X}_{\textrm{Sec} (A)} , [w])$ which evaluates to $\mathcal{E}_A$
at all points except one and $\partial X_A$ at the remaining
point. We define the cycle of all stable maps with such an
evaluation to be $\mathcal{W}_A$ and regard it as the appropriate
compactification of complex structures on Landau--Ginzburg A-models. Applying
techniques from fiber polytopes we obtain the following
description of $\mathcal{W}_A$:

\begin{thm}[\cite{DKK}] The stack $\mathcal{W}_A$ is a toric stack with moment polytope equal to the monotone path polytope of $Sec(A)$. \end{thm}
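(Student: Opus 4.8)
The plan is to deduce the statement from the Billera--Sturmfels theory of fiber polytopes, using the previously established fact that $\mathcal{X}_{Sec(A)}$ is a toric stack with moment polytope $Sec(A)$. First I would pin down the linear functional that defines the monotone direction. The pencil $V_w$ is a map $\mathbb{P}^1\to\mathcal{X}_{Sec(A)}$ in the class $[w]$ sending $\infty$ to the toric boundary stratum $\partial X_A$ and the remaining marked points to the discriminant $\mathcal{E}_A$; pairing the curve class $[w]$ against the torus-invariant divisors determines a linear functional $\ell$ on $Sec(A)$, and the distinguished endpoint $\partial X_A$ singles out the vertex of $Sec(A)$ at which $\ell$ is extremal. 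Composing the moment map of $\mathcal{X}_{Sec(A)}$ with $\ell$ gives a projection $\pi\colon Sec(A)\to[m,M]$, and it is the fiber polytope $\Sigma(Sec(A),\pi)$ of this projection onto a segment that is, by definition, the monotone path polytope.

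Next I would identify the torus-fixed loci of $\mathcal{W}_A$ combinatorially via localization. The torus $T$ acting on $\mathcal{X}_{Sec(A)}$ acts on the moduli of stable maps by post-composition, and a $T$-fixed genus-zero stable map is a chain of $\mathbb{P}^1$'s each mapping to a $T$-invariant curve, i.e.\ to an edge of $Sec(A)$. The class $[w]$ forces the associated edge-path to begin at the $\ell$-extremal vertex (the image of $\infty$) and to cross $\mathcal{E}_A$ exactly $Vol(Q)$ times, while stability together with the evaluation conditions forces the path to be $\ell$-monotone. Hence the $T$-fixed points of $\mathcal{W}_A$ are in bijection with the coherent $\ell$-monotone edge-paths on $Sec(A)$, which are exactly the vertices of $\Sigma(Sec(A),\pi)$.

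I would then match the full stratification. By Billera--Sturmfels the faces of $\Sigma(Sec(A),\pi)$ are the coherent $\pi$-monotone cellular strings of $Sec(A)$, and these are precisely the combinatorial types of the boundary strata of $\mathcal{W}_A$ produced by bubbling: a partial degeneration of the stable map replaces a segment of the path by a cellular string in an order-preserving fashion. Matching the inclusion-reversing poset of strata of $\mathcal{W}_A$ with the face poset of $\Sigma(Sec(A),\pi)$ identifies the fan of $T$-orbit closures of $\mathcal{W}_A$ with the normal fan of the monotone path polytope, so $\mathcal{W}_A$ carries a dense $T$-orbit (the generic coherent monotone path) and is a toric stack whose moment polytope is the monotone path polytope of $Sec(A)$.

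The hard part will be the last matching at the level of \emph{stacks} rather than coarse combinatorics: one must show that the deformation--obstruction theory of these stable maps is unobstructed along $\mathcal{W}_A$ and that the automorphisms of the bubbled configurations reproduce exactly the isotropy groups dictated by the stacky fan of $\Sigma(Sec(A),\pi)$, so that the lattice weights, and not merely the face lattice, agree. This is where the Lafforgue/GKZ description of $\pi\colon\mathcal{X}_{Laf(A)}\to\mathcal{X}_{Sec(A)}$ does the real work: realizing each $T$-invariant curve as a toric degeneration of the universal hypersurface lets one compute the relevant tangent weights and confirm that the evaluation conditions against $\mathcal{E}_A$ and $\partial X_A$ cut out a substack with precisely the predicted stacky toric boundary.
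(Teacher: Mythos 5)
The first thing to note is that the paper contains no proof of this statement to compare against: the theorem is imported wholesale from \cite{DKK} (``Symplectic relations arising from toric degenerations,'' listed as in preparation), and the only methodological indication in the text is the single sentence ``Applying techniques from fiber polytopes we obtain the following description of $\mathcal{W}_A$,'' together with the remark that the monotone path polytope sits inside Billera--Sturmfels' framework of iterated fiber polytopes. So your proposal cannot be matched line-by-line against anything in this paper; all I can do is check it against that hint and on its own merits. On that score, your overall strategy is the natural reconstruction and is consistent with the hint: the torus of $\mathcal{X}_{\textrm{Sec}(A)}$ acts on stable maps by post-composition, $\mathcal{W}_A$ is invariant because the evaluation constraints are, the pencil direction singles out a one-parameter subgroup and hence a functional $\ell$ on $Sec(A)$ (concretely, the coordinate dual to the distinguished section cutting out the toric boundary, which is the marked point at infinity), and the monotone path polytope is by definition the fiber polytope of the projection $\ell\colon Sec(A)\to[m,M]$; even the dimension count is consistent, since $\dim \mathcal{W}_A = \dim Sec(A)-1$ equals the dimension of that fiber polytope.

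There are, however, two genuine gaps. First, coherence: your localization step claims the $T$-fixed points of $\mathcal{W}_A$ are the \emph{coherent} $\ell$-monotone edge-paths, but the constraints you actually invoke --- curve class $[w]$, stability, and the evaluation conditions at $\mathcal{E}_A$ and $\partial X_A$ --- are homological and incidence conditions that do not see coherence at all. Coherence is a convexity condition (existence of a piecewise-linear lifting function), and in general there exist non-coherent monotone edge-paths whose associated chains of invariant curves satisfy every condition you list; they give perfectly good fixed points of the ambient $\mathcal{M}_{0, Vol(Q)+1}(\mathcal{X}_{\textrm{Sec}(A)},[w])$ with the prescribed evaluations. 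The entire content of the theorem is that the \emph{closure} of the locus of honest pencils meets exactly the coherent ones, and this requires analyzing which degenerate configurations arise as limits of one-parameter families of pencils --- the Chow-quotient-style argument in the spirit of Kapranov--Sturmfels--Zelevinsky, which is precisely where the fiber-polytope machinery does its work and which your outline asserts rather than performs. Second, the stack-level identification you defer to the end is not a technical verification but the remaining half of the statement: matching face posets of strata does not determine a toric stack (it does not even determine the coarse toric variety, since the lattice data is missing), so without the unobstructedness and isotropy computations the conclusion ``$\mathcal{W}_A$ is a toric stack with this moment polytope'' does not follow. In short: a sensible roadmap, consistent with the paper's one-sentence hint, but not a proof --- and since the paper itself supplies none, the comparison has to end there.
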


The polytope occurring here is not as widely known as the
secondary polytope, but occurs in a broad framework of so called
iterated fiber polytopes introduced by Billera and Sturmfels.

In addition to the applications of these moduli spaces to
stability conditions, we also obtain important information on the
directed Fukaya categories and their mirrors from this approach.
In particular, the above theorem may be applied to computationally
find a finite set of special Landau--Ginzburg models $\{w_1, \ldots, w_s\}$
corresponding to the fixed points of $\mathcal{W}_A$ (or the
vertices of the monotone path polytope of $Sec (A)$). Each such
point is a stable map to $\mathcal{X}_{\textrm{Sec} (A)}$ whose image in
moment space lies on the $1$-skeleton of the secondary polytope.
This gives a natural semiorthogonal decomposition of the directed
Fukaya category into pieces corresponding to the components in the
stable curve which is the domain of $w_i$. After ordering these
components, we see that the image of any one of them is a
multi-cover of the equivariant cycle corresponding to an edge of
$Sec (A)$. These edges are known as circuits in combinatorics
(see \cite{DKK}).

Now we put this moduli space as a ``zero fiber'' of the twistor family of
moduli family of stability conditions.

We do this in two steps:

1. The following theorem suggests existence of a formal moduli space  $M$ of Landau--Ginzburg models $f\colon \overline{Y} \to \CP^1$.

\begin{thm}[see \cite{KKPS}] There exists a formal moduli space $M$   determined by the solutions of
the Maurer--Cartan equations for  dg-complex

$$
\begin{CD}
\cdots@<<< \Lambda^3 T_{\overline{Y}} @<<< \Lambda^2 T_{\overline{Y}} @<<< T_{\overline{Y}} @<<< \cO_{\overline{Y}} @<<< 0 \\[-3mm]
@. -3     @.             -2      @. -1       @. 0
\end{CD}
$$

\end{thm}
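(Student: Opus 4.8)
The plan is to recognize the displayed complex as the underlying cochain complex of a differential graded Lie algebra and then to invoke the general representability theorem of formal deformation theory. First I would equip the polyvector fields $\bigoplus_{k\ge 0}\Lambda^k T_{\overline{Y}}$ with the Schouten--Nijenhuis bracket $[-,-]$, the canonical graded Lie bracket extending the commutator of vector fields. The differential $\delta$ appearing in the diagram is the operator induced by the superpotential, namely contraction $\iota_{df}$ on polyvector fields, which is the same as the adjoint action $\delta=[f,-]$ of $f$ regarded as a section of $\cO_{\overline{Y}}=\Lambda^0 T_{\overline{Y}}$. Because $f$ is a function one has $[f,f]=0$, so the graded Jacobi identity immediately gives $\delta^2=\tfrac12[[f,f],-]=0$ and shows that $\delta$ is a derivation of the bracket. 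This exhibits $\bigl(\Lambda^\bullet T_{\overline{Y}},\,\delta,\,[-,-]\bigr)$ as a sheaf of DGLAs over $\overline{Y}$.

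Next I would produce a global DGLA over $\CC$ by resolving: passing to the Dolbeault model gives $\mathfrak g=\bigoplus_{p,q}A^{0,q}\bigl(\overline{Y},\Lambda^p T_{\overline{Y}}\bigr)$ with total differential $\bar\partial+\delta$ and the Schouten bracket, a genuine DGLA whose cohomology is the hypercohomology $\mathbb H^\bullet$ of the complex in the statement. The associated deformation functor $\mathrm{Def}_{\mathfrak g}$ assigns to each Artinian local $\CC$-algebra $(A,\mathfrak m)$ the set of degree-one Maurer--Cartan elements $\gamma\in\mathfrak g^1\otimes\mathfrak m$ solving $(\bar\partial+\delta)\gamma+\tfrac12[\gamma,\gamma]=0$, taken modulo the gauge action of $\exp(\mathfrak g^0\otimes\mathfrak m)$; unwinding the definitions, such a $\gamma$ is exactly a compatible deformation of the complex (and generalized, Poisson-type) structure underlying the Landau--Ginzburg model together with its superpotential.

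Then I would apply the fundamental theorem of deformation theory over a field of characteristic zero: every DGLA determines a formal moduli problem, and by the Lurie--Pridham correspondence --- or classically by Schlessinger's criteria together with the Goldman--Millson theorem, once $\mathbb H^1(\mathfrak g)$ and $\mathbb H^2(\mathfrak g)$ are known to be finite dimensional --- the functor $\mathrm{Def}_{\mathfrak g}$ is pro-representable. The pro-representing complete local ring is the sought formal moduli space $M$; its tangent space is $\mathbb H^1$ of the displayed complex and the obstruction space is $\mathbb H^2$.

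The hard part will be twofold. First, one must check that Maurer--Cartan solutions correspond to honest deformations of the map $f\colon\overline{Y}\to\CP^1$: integrating an MC element has to deform the total space and the superpotential simultaneously while respecting the prescribed behavior of $f$ at the fiber over infinity, and it is precisely this compactification condition that singles out the Landau--Ginzburg deformations among all generalized deformations of $\overline{Y}$. Second, pro-representability hinges on the finiteness of the hypercohomology; for proper $\overline{Y}$ this is automatic by coherence, but the open Calabi--Yau geometry of a Landau--Ginzburg model forces one to control the contribution near infinity, so the main technical obstacle is establishing the requisite finite dimensionality --- failing which one retreats to the purely formal derived setting, where representability is automatic but the geometric interpretation of $M$ is correspondingly weaker.
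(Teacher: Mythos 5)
You should know at the outset that the paper does not prove this theorem: it is imported wholesale from \cite{KKPS} (listed as ``in preparation''), and the only commentary offered is that the differential of the displayed complex is (contraction with) $df$, that the complex ``determines deformations of the Landau--Ginzburg model,'' and that these deformations are unobstructed. So your proposal can only be compared with the paper's intended framework, not with a written argument. Against that yardstick your reconstruction is the right one: equipping $\Lambda^{\bullet}T_{\overline{Y}}$ with the Schouten--Nijenhuis bracket, recognizing the differential as $\iota_{df}=[f,-]$, passing to the Dolbeault DGLA, and taking Maurer--Cartan solutions modulo gauge is exactly the mechanism the paper alludes to, and identifying the tangent space with $\mathbb{H}^{1}$ of the displayed complex is what makes the paper's later dimension count meaningful.

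Two corrections, though. First, you fold the condition that deformations ``respect the prescribed behavior of $f$ at the fiber over infinity'' into this theorem. In the paper that condition is deliberately \emph{not} imposed here: $M$ is explicitly called ``too big,'' and fixing the fiber at infinity is a separate second step, implemented by the sub-DGLA $T_{\overline{Y},Y_\infty}\to f^{*}T_{\CP^1,\infty}$ of the following theorem, which carves out the smaller moduli space matching stability conditions. Treating that constraint as part of the present statement would change what $M$ parameterizes. Second, your finiteness worry is misplaced: the theorem concerns the compactified model $f\colon\overline{Y}\to\CP^1$, so $\overline{Y}$ is proper, the terms of the complex are coherent, and the hypercohomology is automatically finite dimensional; no retreat to the purely formal derived setting is needed. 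The one genuinely missing ingredient relative to the paper's gloss is unobstructedness: Schlessinger/Goldman--Millson gives you a pro-representable functor with obstructions in $\mathbb{H}^{2}$, whereas the paper asserts the deformations are unobstructed. That is a Bogomolov--Tian--Todorov-type statement, and proving it requires the dg Batalin--Vilkovisky structure on polyvector fields induced by the fiberwise holomorphic volume form together with a degeneration (Hodge-to-de Rham) argument --- none of which is in your outline. Since the literal statement only asserts existence of the formal moduli space, your proposal covers it; but if your $M$ is to be the smooth space the paper goes on to use, that is the idea you would still need.
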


In the above complex the differential is $df$ and we can restate it by saying that  this complex determines  deformations of the Landau--Ginzburg  model, and these deformations are unobstructed. We also have a $\CC^*$-action on $M$ with fixed points corresponding to limiting stability conditions  --- see \cite{KKPS}.

Over the moduli space  $M$ defined above we have a variation of Hodge structures defined by the cohomologies of the perverse sheaf of vanishing cycles over $Y$. This defines local system $V$  over $M$ and its compactification.

\begin{conj}[see \cite{KKPS}] The  relative  completion with respect of $V$  in the fixed points of the  $\CC^*$-action on the compactification of $M$
 has a mixed Hodge structure.
\end{conj}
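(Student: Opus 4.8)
The plan is to recognise the object in question as a relative (Malcev) completion of a fundamental group with coefficients in a polarizable variation of Hodge structure, to equip it with Hain's mixed Hodge structure over the interior, and then to transport this structure to the $\CC^{*}$-fixed points by way of Schmid's limiting mixed Hodge structure. First I would make the coefficient system genuinely Hodge-theoretic. Let $M^{\circ}\subset M$ be the locus over which the Landau--Ginzburg model $f\colon \overline{Y}\to \CP^{1}$ is tame, so that the perverse sheaf of vanishing cycles restricts there to a polarizable variation of Hodge structure with underlying local system $V$. By the theorem of the fixed part and the semisimplicity of the monodromy of a polarized variation (Deligne, Schmid), the monodromy representation $\rho\colon \pi_{1}(M^{\circ},b)\to \mathrm{GL}(V_{b})$ has reductive Zariski closure $S$. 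I would then form the relative completion $\mathcal{G}$ of $\pi_{1}(M^{\circ},b)$ relative to $S$: the pro-algebraic group sitting in an extension $1\to \mathcal{U}\to \mathcal{G}\to S\to 1$ with $\mathcal{U}$ pro-unipotent, universal among Zariski-dense lifts of $\rho$.

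Next I would invoke Hain's theory: because $V$ underlies a polarized variation of Hodge structure, the coordinate ring of $\mathcal{G}$ --- equivalently the continuous dual of the completed enveloping algebra of $\mathrm{Lie}\,\mathcal{G}$ --- carries a canonical ind-mixed Hodge structure, functorial in the pointed variation. Concretely this is obtained by placing a mixed Hodge complex structure on the bar construction of the de Rham complex of $M^{\circ}$ with coefficients in the tensor algebra on $V$, so that iterated integrals acquire weights; the reductive quotient $S$ supplies the pure part and the pro-unipotent radical $\mathcal{U}$ carries the nontrivial weight filtration. This already settles the conjecture at any interior fixed point of the $\CC^{*}$-action.

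The substance of the conjecture lies at the boundary fixed points, which parametrize the limiting (degenerate) stability conditions. Near such a point the variation degenerates, and Schmid's nilpotent orbit theorem furnishes a limiting mixed Hodge structure $V_{\lim}$ whose weight filtration is the relative monodromy weight filtration of the nilpotent logarithm $N=\log T_{\mathrm{u}}$. Using Deligne's canonical extension of $(V,\nabla)$ across the boundary and the associated logarithmic de Rham complex, I would replace the interior bar complex by its logarithmic counterpart; the residue of the connection provides $N$, and the weight filtration on the completion is assembled from $N$ exactly as in the nilpotent-orbit construction. In the spirit of Simpson's twistor picture the scaling weight of the $\CC^{*}$-action should match the Hodge-theoretic weight at the fixed locus, so the fixed-point relative completion acquires its mixed Hodge structure as the relative completion computed from the limiting datum $V_{\lim}$.

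The hard part is twofold. First one must establish \emph{admissibility} of $V$ (in the sense of Kashiwara--Saito and Steenbrink--Zucker) along the boundary of a compactification of $M$; this is delicate because, as observed earlier, the discriminant is highly singular and the fixed loci sit in deep strata, so one needs a toroidal resolution of the compactification compatible with the $\CC^{*}$-action together with a several-variable limiting mixed Hodge structure in the manner of Cattani--Kaplan--Schmid. Second, since $M$ is only the formal moduli space cut out by the Maurer--Cartan equation of the $df$-complex, the whole construction must be carried out in the pro-/formal category, and one must verify that the ind-mixed Hodge structure on the bar complex descends to the formal completion at the fixed point and is independent of the chosen resolution --- that is, that the limiting mixed Hodge structure is an invariant of the fixed stability condition and not of the particular degeneration path approaching it.
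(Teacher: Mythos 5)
The statement you are attempting is, in the paper, a \emph{conjecture}: it is attributed to the forthcoming work \cite{KKPS} (``in preparation'') and no argument for it appears anywhere in the text. There is therefore no paper proof to measure your proposal against; all that can be assessed is whether your sketch would itself close the conjecture, and it would not.

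Two concrete gaps. First, the coefficient object is not what your argument needs it to be. The local system $V$ is defined in the paper as the cohomology of the perverse sheaf of vanishing cycles of the Landau--Ginzburg potential over $M$; this naturally carries the structure of a (mixed) Hodge module, not of a pure polarizable variation of Hodge structure, and your passage to a ``tame'' locus $M^{\circ}$ where it becomes a polarized VHS is asserted rather than constructed. Moreover, Hain's mixed Hodge structure on a relative Malcev completion requires the base to be a smooth quasi-projective pointed variety, whereas $M$ here is a \emph{formal} moduli space, cut out by the Maurer--Cartan equation of the dg-complex with differential $df$ and possibly badly singular; neither $\pi_1(M^{\circ},b)$ nor the bar construction on its de Rham complex is available in the form your argument uses, and your closing remark that everything ``must be carried out in the pro-/formal category'' is precisely the step that is missing, not a verification. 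Second, the items you set aside as ``the hard part'' --- admissibility of $V$ along the highly singular compactification, a several-variable Cattani--Kaplan--Schmid limit compatible with the $\CC^{*}$-action, and above all the identification of the $\CC^{*}$-scaling weight with the Hodge-theoretic weight at the fixed locus --- are not technical details left to the reader: they are the actual content of the conjecture, which is intended as the bridge between Stability Hodge Structures and Simpson's nonabelian Hodge theory drawn in Section~\ref{section:Higgs bundles}. Deferring them means your text is a plausible program (indeed, roughly the program the analogy in the paper suggests), but not a proof.
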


2. The above moduli space is too big.  So we will cut its dimension down to the moduli space of stability conditions. We introduce a new moduli space which embeds in $M$.

We   study deformations of $\overline{Y} \to \CP^1$  with ``fixing fiber at
infinity''. Deformation of a smooth variety $\overline{Y}$  with fixed $\CP^1$  is controlled by the following sheaf of dg Lie algebras on $\overline{Y}$:
$$T_{\overline{Y}} \to f^* T_{\CP^1}$$
(the differential is the tangent map).

By fixing the fiber at infinity we get  a subsheaf of dg Lie algebras
$$T_{\overline{Y},Y_\infty}  \to f^*T_{\CP^1, \infty}.$$

\begin{thm}[\cite{KKPS}]  A subsheaf of dg Lie algebras
$$T_{\overline{Y},Y_\infty}  \to f^*T_{\CP^1, \infty}.$$ determines 
smooth moduli stack.
Its dimension is equal to the dimension of the moduli space of  stability conditions.
\end{thm}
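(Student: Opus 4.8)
The plan is to treat the two-term complex $\mathcal{G}^\bullet = [\,T_{\overline{Y},Y_\infty} \xrightarrow{df} f^* T_{\CP^1,\infty}\,]$ as a sheaf of differential graded Lie algebras governing the deformations of the pair $(\overline{Y},f)$ subject to holding the fiber $Y_\infty$ at infinity fixed, and to read off both the smoothness and the dimension of the associated moduli stack from its hypercohomology. The tangent space to the stack is $\mathbb{H}^1(\overline{Y},\mathcal{G}^\bullet)$ and the obstructions live in $\mathbb{H}^2(\overline{Y},\mathcal{G}^\bullet)$, so the statement splits into two tasks: showing that the Maurer--Cartan deformation functor of $\mathcal{G}^\bullet$ is unobstructed, and computing $\dim \mathbb{H}^1$ and matching it with the dimension of the moduli of stability conditions.

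For smoothness I would argue by a logarithmic Bogomolov--Tian--Todorov mechanism, bootstrapping from the earlier theorem that the large moduli space $M$, governed by $[\,\cdots \leftarrow \Lambda^2 T_{\overline{Y}} \leftarrow T_{\overline{Y}} \leftarrow \cO_{\overline{Y}}\,]$ with differential $df$, is unobstructed. The Calabi--Yau condition of Definition~\ref{definition: toric LG} makes $Y_\infty$ behave as an anticanonical fiber, so that $(\overline{Y},Y_\infty)$ is a log Calabi--Yau pair and $\mathcal{G}^\bullet$ is the log tangent dgla obtained from the unobstructed ambient complex by imposing the fiber-at-infinity constraint. The key point is to prove that $\mathcal{G}^\bullet$ is homotopy abelian: I would produce the required formality from the degeneration of the Hodge--de Rham spectral sequence for the perverse sheaf of vanishing cycles, which is exactly the mixed Hodge structure predicted in the conjecture preceding this theorem. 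Homotopy abelianness forces the Maurer--Cartan equation to be gauge-equivalent to its linearization, so every first-order deformation extends and the moduli stack is smooth.

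For the dimension I would compute $\mathbb{H}^1(\mathcal{G}^\bullet)$ through the hypercohomology long exact sequence of the two-term complex, relating it to $H^\bullet(T_{\overline{Y},Y_\infty})$ and $H^\bullet(f^* T_{\CP^1,\infty})$. After quotienting by the infinitesimal automorphisms in $\mathbb{H}^0$, the surviving first-order deformations are precisely the ``interior'' deformations of the superpotential that fix its behaviour along $Y_\infty$; for a Landau--Ginzburg model these are counted by the number of critical points, that is the Milnor number $\mu$ of $f$. On the mirror side the moduli of stability conditions of the Fukaya--Seidel category is an open subset of $\mathrm{Hom}(K_0,\CC)$, whose rank equals the number of thimbles (vanishing cycles) generating the category, again $\mu$. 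The identification $\dim \mathbb{H}^1 = \mu = \dim \mathrm{Stab}$ is the infinitesimal shadow of homological mirror symmetry: the deformation parameters pair with the central charges realized as periods $\int_{\Delta_i}\! e^{f}$ of the thimbles $\Delta_i$. I would verify this on the $\mathbb A_n$ example, where $f$ is a degree-$(n+1)$ polynomial with $\mu = n$ critical points and $\mathrm{Stab}(\mathbb A_n)$ has complex dimension $n$, matching the count $(2(n+1)-3)+1 = 2n$ real dimensions recorded above.

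The main obstacle is the smoothness step, and specifically the formality of $\mathcal{G}^\bullet$ in this non-compact, logarithmic situation: the usual Bogomolov--Tian--Todorov proof uses properness and a genuine holomorphic volume form, whereas here one works with the relative fiber at infinity and the sheaf of vanishing cycles, so the needed degeneration of the Hodge--de Rham spectral sequence is not automatic and is precisely what ties the theorem to the mixed Hodge structure conjectured above. A secondary difficulty is making the mirror dimension count rigorous: identifying the analytic ``interior'' deformation count with the rank of $K_0$ presupposes enough of the Fukaya--Seidel side of HMS to know that the thimbles generate and that their classes are independent, which for general $\overline{Y}$ is so far expected rather than established.
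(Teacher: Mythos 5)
First, a point of reference: the paper does not actually prove this theorem --- it is quoted from \cite{KKPS}, which is listed as ``in preparation,'' and the surrounding text only supplies the setup (the large Maurer--Cartan moduli space $M$ with its $\CC^*$-action, and the passage to the subsheaf $T_{\overline{Y},Y_\infty} \to f^*T_{\CP^1,\infty}$ that cuts $M$ down). So your proposal can only be measured against that framework, and in architecture it is consistent with it: Maurer--Cartan functor of the two-term dgla, tangent space $\mathbb{H}^1$, obstructions in $\mathbb{H}^2$, a Bogomolov--Tian--Todorov mechanism for smoothness, and a dimension match with ${\sf Stab}$ checked on the $\mathbb A_n$ example exactly as the paper does with its polygon count.

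The genuine gap is in your smoothness step. You propose to ``bootstrap'' homotopy abelianness from the theorem that the big complex $\Lambda^\bullet T_{\overline{Y}}$ with differential $df$ is unobstructed, but unobstructedness does not pass to sub-dglas: the constrained complex is not quasi-isomorphic to the ambient one (the whole point, as the paper says, is that $M$ is ``too big,'' so the two have different $\mathbb{H}^1$), and imposing boundary conditions can create obstructions even when the ambient problem has none --- Mumford's singular Hilbert scheme of curves in the rigid $\PP^3$ is the standard warning that constrained deformation functors are not smooth just because the unconstrained one is. The substitute input you reach for, degeneration of a Hodge--de Rham type spectral sequence attached to the sheaf of vanishing cycles, is precisely the content the paper records only as a conjecture (the MHS on the relative completion at the $\CC^*$-fixed points); so the crux of the theorem is assumed rather than proved. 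A second, smaller gap is the dimension count: Bridgeland's local homeomorphism ${\sf Stab} \to \mathrm{Hom}(K_0,\CC)$ is fine, but the identification $\dim \mathbb{H}^1 = \mu$ requires an actual computation with the two-term complex (separating deformations of the pair $(\overline{Y},Y_\infty)$ from unfoldings of the potential), and the equality $\mathrm{rk}\, K_0 = \mu$ presupposes that the thimbles generate the Fukaya--Seidel category with independent classes --- an HMS-type statement that, as you note yourself, is expected rather than established for general $\overline{Y}$. You flag both difficulties honestly, but as written they are exactly the two places where the proof is missing rather than merely deferred.
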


A geometric realization of this moduli space, which embeds in $M$  was described above. We will denote it  by   $M(\PP^1, CY)$ (or $M(\PP^k, CY)$ for
multipotential Landau--Ginzburg models).

\begin{rem} We can consider  bigger moduli space by fixing the vector fields only over a part of the  divisor at infinity. This corresponds to  taking a Landau--Ginzburg model through a point of non maximal degeneration. This defines a bigger moduli space of stability conditions with more stable objects.
\end{rem}

\begin{rem} The  moduli spaces we discuss could have many components. Such a phenomenon would have many interesting  implications. It produces possibilities of many new birational and symplectic invariants.
\end{rem}

In the same way as the fixed point set under the $\CC^*$-action plays an important role in describing the rational homotopy types of smooth projective varieties we study the fixed points of the  $\CC^*$-action on $F$ and derive information about the homotopy type of a category. In the rest of the paper we will denote
$\mathcal{W}_A$ by $M(\PP^1, \mathcal{X}_{\textrm{Sec} (A)} )$ (or $M(\PP^k, CY)$) in order to stress the connection with Landau--Ginzburg models
(here $CY$ denotes the moduli space of Calabi--Yau mirrors to the anticanonical section of the Fano manifold we consider).

After the journal version of this paper was published, the following related works were pointed out to us by P.\,Boalch:~\cite{BB04},~\cite{B07},~\cite{B12}.

\section{Spectra and holomorphic convexity}

\label{section:spectra}

In this section we explain briefly  how Orlov spectra
are related to Stability Hodge Structures.

Recall that noncommutative Hodge structures were introduced by  Kontsevich and Katzarkov and Pantev \cite{KKP1}  as  means of bringing
the techniques and tools of Hodge theory into the categorical and
noncommutative realm.  In the classical setting, much of the
information about an isolated singularity is recorded by means of
the Hodge spectrum, a set of rational eigenvalues of the monodromy
operator.  The Orlov spectrum (defined below), is a categorical
analogue of this Hodge spectrum appearing in the work of Orlov and
Rouquier. The missing numbers in
the spectra are called gaps.

Let $\mathcal T$ be a triangulated category.  For any $G \in
\mathcal T$ denote by $\langle G \rangle_0$ the smallest full
subcategory containing $G$ which is closed under isomorphisms,
shifting, and taking finite direct sums and summands. Now
inductively define $\langle G \rangle_n$ as the full subcategory
of objects, $B$, such that there is a distinguished triangle, $X
\to B \to Y \to X[1]$, with $X \in \langle G \rangle_{n-1}$ and $Y
\in \langle G \rangle_0$, and direct summands of such objects.

\begin{defn}
Let $G$ be an object of a triangulated category $\mathcal{T}$.  If
there is an $n$ with $\langle G \rangle_{n} = \mathcal T$, we set,
\begin{displaymath}
 t(G):= \text{min } \lbrace n \geq 0 \ | \ \langle G
 \rangle_{n} = \mathcal T \rbrace.
\end{displaymath}
Otherwise, we set $t(G) := \infty$.  We call $t(G)$
the \emph{generation time} of $G$. If $t(G)$ is finite,
we say that $G$ is a \emph{strong generator}. The \emph{Orlov
spectrum} of $\mathcal T$ is the union of all possible generation
times for strong generators of $\mathcal T$.  The \emph{Rouquier
dimension} is the smallest number in the Orlov spectrum.  We say
that a triangulated category, $\mathcal T$ has a \emph{gap} of
length $s$, if $a$ and $a+s+1$ are in the Orlov spectrum but $r$
is not in the Orlov spectrum for $a < r < a+s+1$.
\end{defn}

The first connection to Hodge theory appears in the form of the
following theorem:
\begin{thm}[\cite{BFK}]
Let $X$ be an algebraic variety possessing an isolated
hypersurface singularity. The Orlov spectrum of the category of
singularities of $X$ is bounded by twice the embedding dimension
times the Tjurina number of the singularity.
\label{thm:isohypspecbound}
\end{thm}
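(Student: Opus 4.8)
The plan is to translate the statement into the language of matrix factorizations and then run a ghost-type counting argument. First I would invoke Orlov's theorem together with Eisenbud's description to identify the singularity category $D^b_{\mathrm{sg}}(X)$ with the homotopy category $\mathrm{MF}(R,f)$ of matrix factorizations of the defining equation, where $R=\CC[[x_0,\dots,x_n]]$ (or the relevant localization), $f$ cuts out the hypersurface, and the number of variables $e=n+1$ is exactly the embedding dimension. The crucial structural input is that the whole category is linear over the Tjurina algebra. Indeed, a matrix factorization is a pair $(\phi,\psi)$ with $\phi\psi=\psi\phi=f\cdot\mathrm{id}$; differentiating this identity gives $(\partial_i\phi)\psi+\phi(\partial_i\psi)=\partial_i f\cdot\mathrm{id}$, which exhibits multiplication by each partial $\partial_i f$ as null-homotopic, while multiplication by $f$ is null-homotopic by definition. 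Hence the ideal $(f,\partial_0 f,\dots,\partial_n f)$ annihilates $\mathrm{MF}(R,f)$, so the category is linear over the Tjurina algebra $T=R/(f,\partial_0 f,\dots,\partial_n f)$, which for an isolated singularity is a local Artinian $\CC$-algebra of length $\tau$, the Tjurina number; its maximal ideal $\mathfrak n$ satisfies $\mathfrak n^{\tau}=0$.

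Next I would fix a generator and control how cheaply it is built. The natural choice is the stabilized residue field $k^{\mathrm{st}}$, which generates $\mathrm{MF}(R,f)$ precisely because the singularity is isolated (every object is supported at the singular point). Over the regular ring $R$ the residue field $k$ is resolved by the Koszul complex on $x_0,\dots,x_n$, a complex of length $e$; passing to the $2$-periodic matrix-factorization picture folds this resolution and accounts for the factor of $2$, so $k^{\mathrm{st}}$ is reachable from the free object in on the order of $2e$ cone-and-summand steps. The generation bound itself I would then extract through the Ghost Lemma (Kelly, Rouquier): one has $t(G)\le N$ as soon as every composite of $N+1$ $G$-ghosts vanishes, and the two factors $\tau$ and $2e$ should enter separately. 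The nilpotency $\mathfrak n^{\tau}=0$ bounds the number of successive multiplications by radical elements that can act nontrivially: any object carries a $T$-module filtration of length at most $\tau$ whose subquotients are annihilated by $\mathfrak n$, hence are $k$-linear. Each such $k$-linear subquotient is then assembled from $k^{\mathrm{st}}$ within the $2e$ Koszul steps above. Composing the two filtrations shows every object lies in $\langle k^{\mathrm{st}}\rangle_{2e\tau}$, so $t(k^{\mathrm{st}})\le 2e\tau$.

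To bound the entire Orlov spectrum rather than a single generation time, I would rerun the same $T$-linear filtration argument relative to an arbitrary strong generator $G$. The nilpotency $\mathfrak n^{\tau}=0$ is intrinsic to the category and still cuts any object into at most $\tau$ successive $\mathfrak n$-annihilated layers, and each layer, being $k$-linear, is built from $G$ within the Koszul-controlled number of steps; this yields $t(G)\le 2e\tau$ uniformly over all strong generators, which is the asserted bound on the spectrum. The $\CC^{*}$/grading conventions of $\mathrm{MF}(R,f)$ must be tracked throughout so that the doubling coming from the $\ZZ/2$-grading is counted exactly once.

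The hard part will be the bookkeeping that fuses the two filtrations into the single clean constant $2e\tau$ rather than a larger multiple. Concretely, one must show that a composite of $2e\tau+1$ ghost maps is forced to vanish by exploiting the nilpotency of $\mathfrak n$ and the length-$e$ Koszul resolution \emph{simultaneously}, rather than multiplying the two estimates wastefully. Two auxiliary points also require care: verifying that $k^{\mathrm{st}}$ is genuinely a strong generator (and not merely a classical one) for an arbitrary isolated hypersurface singularity, and confirming that multiplication by elements of $\mathfrak m$ produces cones that decompose into shifts of $k^{\mathrm{st}}$ in the claimed number of steps. Both reduce to standard support and Koszul-duality computations, but they are precisely where the constant is pinned down, so I would expect the bulk of the technical work to live there.
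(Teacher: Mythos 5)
First, a caveat: the paper does not prove Theorem~\ref{thm:isohypspecbound} at all --- it is quoted from \cite{BFK} as background for the discussion of spectra and gaps, so there is no in-paper argument to compare your proposal against. Measured against the actual proof in \cite{BFK}, your reconstruction has the right skeleton: the identification of the singularity category with matrix factorizations, the observation that differentiating $\phi\psi=f\cdot\mathrm{id}$ makes each $\partial_i f$ act null-homotopically (hence linearity of all stable Hom-spaces over the Tjurina algebra $T$), the Koszul resolution of the residue field as the source of the factor $2e$, and the nilpotency $\mathfrak{n}^{\tau}=0$ as the source of the factor $\tau$. These are exactly the ingredients used there.

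The genuine gap is in the final assembly, in two places. First, the statement that ``any object carries a $T$-module filtration of length at most $\tau$ whose subquotients are annihilated by $\mathfrak{n}$'' is not meaningful as written: objects of a $T$-linear triangulated category are not $T$-modules, only the morphism spaces are, so the nilpotency of $\mathfrak{n}$ can only be exploited through ghost maps (a composite of $N$ morphisms lying in $\mathfrak{n}\cdot\underline{\mathrm{Hom}}$ lies in $\mathfrak{n}^{N}\cdot\underline{\mathrm{Hom}}$ and hence vanishes for $N\geq\tau$); moreover the converse ghost lemma you invoke --- vanishing of all $(N+1)$-fold ghost composites implies $t(G)\le N$ --- is false in general triangulated categories, and must be replaced either by the forward ghost lemma together with explicitly constructed towers or by the extra structure available for matrix factorizations. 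Second, and more seriously, your uniform bound over all strong generators rests on the claim that each $\mathfrak{n}$-annihilated ($k$-linear) layer ``is built from $G$ within the Koszul-controlled number of steps'' for an \emph{arbitrary} strong generator $G$. That is unjustified: the Koszul argument builds such layers out of $k^{\mathrm{st}}$, not out of $G$. What is actually true, and what closes the argument in \cite{BFK}, is the opposite traversal: for any nonzero object $G$, tensoring with the Koszul complex on $x_0,\dots,x_n$ produces, in on the order of $2e$ cones, a nonzero object annihilated by $\mathfrak{m}$ that decomposes into shifts of $k^{\mathrm{st}}$, whence $k^{\mathrm{st}}\in\langle G\rangle_{a}$ with $a$ of order $2e$; combining this with $\mathcal{T}=\langle k^{\mathrm{st}}\rangle_{b}$, $b$ of order $\tau$, via the multiplicative composition rule $\mathcal{T}=\langle G\rangle_{(a+1)(b+1)-1}$ yields the bound of the form $2e\tau$ for \emph{every} strong generator. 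Without this step your two estimates never interact for arbitrary $G$, so the ``fusion'' you defer to the end is precisely the missing mathematical content rather than bookkeeping.
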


After this brief review of theory of spectra and their gaps we connect them with SHS. Let $SHS(X)$ be the Stability Hodge Structure of $D^b(X)$
for given Fano variety $X$, $M(\PP^1, CY)$ is its zero fiber.

\begin{conj}[\cite{KKPS}]
Let $p$  be a point of the divisor $D$ at infinity of the
compactification of $M(\PP^1, CY)$.
The mixed Hodge structures on the completion of the local ring $O_p$,
  where $p$  runs over all components of $D$, determines the spectrum of
$D^{b}(X)$.
\end{conj}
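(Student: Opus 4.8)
The plan is to prove the conjecture by threading its two sides through the intermediate object already constructed in this paper: the smooth moduli space $M(\PP^1, CY)$ with its $\CC^*$-action, its boundary divisor $D$, and the variation of Hodge structure $V$ coming from the perverse sheaf of vanishing cycles. The strategy is to show, in three stages, that the boundary stratification of $D$ reproduces the lattice of semiorthogonal decompositions of $D^b(X)$, that the limiting mixed Hodge structure on each stratum carries a monodromy weight filtration whose length measures generation time, and finally that \emph{strictness} of the Stability Hodge Structure forces these lengths to fill out exactly the Orlov spectrum, gaps and all.

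First I would localize. By the smoothness theorem for the stack governed by $T_{\overline{Y},Y_\infty} \to f^* T_{\CP^1,\infty}$, the space $M(\PP^1, CY)$ is smooth of dimension $\dim {\sf Stab}$, and the fixed locus of the $\CC^*$-action lies in $D$. Each component of $D$ records a limiting stability condition, i.e.\ a degeneration of the Landau--Ginzburg model along which a set of vanishing cycles collapses, and passing to the quotient by these cycles is precisely the semiorthogonal decomposition attached to that stratum. This is exactly the behaviour of the $\mathbb A_4$ example, where the components of $D$ give the decomposition into $\mathbb A_3 \times \mathbb A_1$ while the singular loci of $D$ give $\mathbb A_2 \times \mathbb A_2$. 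The first step is therefore to prove that the face poset of $D$ --- equivalently, by the monotone path polytope theorem, the vertex set $\{w_1,\dots,w_s\}$ of $\mathcal W_A$ together with its incidences --- is in order-preserving bijection with the poset of iterated semiorthogonal decompositions of $D^b(X)$ built from the circuit relations \eqref{eq:circ1} and \eqref{eq:circ2}.

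Next I would extract the Hodge-theoretic datum. Taking the relative completion of $V$ at a fixed point $p$ of the $\CC^*$-action produces, granting the stated conjecture on relative completion, a mixed Hodge structure on $\widehat{\cO}_p$ whose weight filtration is the monodromy weight filtration of $V$ around the component of $D$ through $p$. The claim to establish is that, stratum by stratum, the length of this weight filtration equals the generation time of the generator obtained by mutating across the walls meeting $p$: the monodromy weight filtration plays the role of the semiorthogonal filtration realizing a generator, so that its length is $t(G)$. This matches quantitatively the mutation $G = L_1+L_2+L_3+L_4 \rightsquigarrow G' = L'+L_1+L_3+L_4$ of the $\mathbb A_4$ discussion, which drops $t$ from $3$ to $2$ exactly as the weight filtration shortens across the corresponding wall. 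The Tate type of the limit --- cusp versus conifold --- then dictates which integers are attained, so that the weights realized by no boundary stratum are precisely the gaps of the spectrum.

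The hard part will be bridging the linear datum, the weight filtration of the limiting mixed Hodge structure, to the genuinely non-linear datum, the generation time of a triangulated category. As emphasized above, a noncommutative Hodge structure is by itself too coarse to detect generation time, and merely possessing a mixed Hodge structure yields at best a bound of the type of Theorem~\ref{thm:isohypspecbound}, not an exact determination. This is where the full Stability Hodge Structure must enter: the twistor enhancement $\mathcal S \to \PP^1$, together with the identification of the zero fiber $S_0$ with a Dolbeault-type moduli space, should supply strictness of the $\CC^*$-weights against the Hodge filtration. Granting strictness, the conjecture reduces to the purely combinatorial assertion that the lengths of the monotone paths in $\textrm{Sec}(A)$ indexing the $w_i$ realize exactly the Orlov spectrum, with gaps corresponding to weights attained by no stratum. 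Proving that strictness forces this exact combinatorial matching --- upgrading a bound to an equality --- is the principal obstacle.
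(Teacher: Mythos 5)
The statement you are trying to prove is not proved anywhere in this paper: it is stated as a conjecture, attributed to the reference [KKPS] which is ``in preparation,'' and the surrounding text offers only motivation, not an argument. So there is no proof of record to measure your proposal against, and your text should be judged as a research program. As such it has genuine gaps, one of which is structural. In the paper, strictness (together with functoriality) of the Stability Hodge Structure is asserted to be a \emph{consequence} of this very conjecture --- ``As a consequence of the above conjecture we have that SHS satisfy two important properties --- functoriality and strictness'' --- whereas your third stage invokes strictness, supplied by the twistor family and the Dolbeault identification of the zero fiber, as the key \emph{input} that upgrades a bound to an equality. Relative to the paper's logical architecture this is circular: you cannot use strictness to establish the conjecture from which strictness is to be derived, unless you first give an independent proof of strictness, which neither you nor the paper provides.

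Beyond that, each of your three stages rests on an assertion that is itself at the level of the paper's open conjectures rather than its established results. The order-preserving bijection between the face poset of $D$ and the poset of iterated semiorthogonal decompositions of $D^b(X)$ is extrapolated from the $\mathbb A_4$ example and the circuit-relation theorems of [DKK], but no argument is given that circuits exhaust the boundary combinatorics for a general Fano $X$. The existence of a mixed Hodge structure on the completion $\widehat{\cO}_p$ is precisely the paper's earlier conjecture on relative completions (also deferred to [KKPS]), so you are proving one conjecture by assuming another. And the central quantitative claim --- that the length of the monodromy weight filtration at a stratum equals the generation time $t(G)$ of the corresponding generator --- is exactly the content of the conjecture restated in different words; the single $\mathbb A_4$ mutation dropping $t(G)=3$ to $t(G')=2$ is consistency evidence, not a proof. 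You acknowledge this honestly in your final sentence, but that acknowledgment means the proposal terminates at the point where the actual mathematical work would have to begin.
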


\begin{rem}  The above considerations suggests the existence of a Riemann--Hilbert correspondence for $SHS(X)$ for Fano variety $X$ as well as deep and interesting analytical interpretation of it by analogy with Yang--Mills--Higgs equations.
\end{rem}

As a consequence of the above conjecture we have that SHS satisfy two important properties --- functoriality and strictness. We arrive at:

\begin{conj} The infinite chain  condition (\cite{EKPR})  can be ruled out for the universal coverings of  smooth projective surfaces.

\end{conj}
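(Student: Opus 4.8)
The plan is to use the \emph{strictness} of the mixed Hodge structures carried by $SHS(X)$---the property highlighted just above---as a substitute for Noetherianity that is incompatible with an infinite chain, thereby extending the argument of \cite{EKPR} past the residually finite case.

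I would first recall the role of the infinite chain condition in \cite{EKPR}. For a smooth projective surface $X$ with universal cover $\widetilde{X}$, holomorphic convexity can fail only through an infinite chain of compact curves $C_1, C_2, \ldots \subset \widetilde{X}$, with consecutive members meeting, along which no proper map to a Stein space can be constructed. When $\pi_1(X)$ is linear one separates such a chain using the variations of Hodge structure (equivalently Higgs bundles) attached to linear representations; residual finiteness is exactly what guarantees that finite-dimensional linear data eventually see the whole chain. In the non-residually-finite case no single linear representation suffices, and the idea is to replace linear representations by the nonlinear object $SHS(X)$, whose zero fiber $M(\PP^1, CY)$ carries, at each cusp $p$ of the divisor $D$ at infinity, a mixed Hodge structure on the completed local ring $\widehat{O}_p$.

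The steps I would carry out are as follows. First, I would attach to each curve $C_i$ in a candidate chain an object (a thimble, or vanishing cycle) in the Fukaya--Seidel category underlying $SHS(X)$, hence a class that degenerates toward $D$; functoriality of $SHS$ should make the inclusions $C_i \hookrightarrow C_i \cup C_{i+1}$ induce strict morphisms of the associated mixed Hodge structures. Second, I would show that an infinite chain forces a strictly increasing infinite sequence of genuinely distinct sub- and quotient mixed Hodge structures on $\widehat{O}_p$ at some cusp. Third, I would invoke strictness together with finite-dimensionality: since morphisms of mixed Hodge structures are strict for the weight and Hodge filtrations, the relevant structure has finite length, so no infinite strictly increasing chain of sub-objects can occur. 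This contradicts the existence of the infinite chain and yields holomorphic convexity.

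The hard part will be the second step: proving that the geometry of an infinite chain produces a genuinely infinite, non-collapsing increasing sequence in the mixed Hodge structure, rather than one that eventually repeats. This is precisely where the functoriality of $SHS(X)$ must be leveraged, and where one must control the interaction between the Shafarevich equivalence relation on $\widetilde{X}$ and the degeneration of stability conditions toward $D$. Of course the whole argument is conditional on the preceding conjectures, so making it unconditional first requires establishing the existence of $SHS(X)$ together with the mixed Hodge structure on $\widehat{O}_p$ at the cusps.
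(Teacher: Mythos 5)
The statement you set out to prove is not actually proved in the paper: it appears there as a conjecture, and the only support the paper gives is the remark that functoriality and strictness of $SHS(X)$ --- themselves consequences of the preceding, equally conjectural, statement that the completed local rings $\widehat{O}_p$ at points of the divisor $D$ at infinity of $M(\PP^1, CY)$ carry mixed Hodge structures determining the spectrum of $D^b(X)$ --- ``give an approach'' to proving holomorphic convexity. Your proposal reconstructs exactly this intended strategy: replace the linear representations used in \cite{EKPR} by the nonlinear object $SHS(X)$, transport an infinite chain of compact curves in $\widetilde{X}$ into degenerating stability data at a cusp of $D$, and then use strictness together with finite length of mixed Hodge structures to forbid an infinite strictly increasing chain of sub-objects. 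So as a reading of the paper's intent, your outline is faithful.

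But it is not a proof, and the gap is precisely the one you flag yourself: your step two --- that an infinite chain $C_1, C_2, \ldots$ of compact curves forces a genuinely infinite, non-collapsing increasing sequence of sub-mixed-Hodge-structures on $\widehat{O}_p$ --- is the entire mathematical content of the conjecture, and neither you nor the paper gives any construction achieving it. The surrounding steps are routine (strictness plus finite dimensionality rules out infinite strictly increasing chains in any abelian category of mixed Hodge structures); the difficulty is building the functor from Shafarevich-type chains on $\widetilde{X}$ to filtered sub-objects at a cusp and proving injectivity of the resulting sequence. Moreover this construction presupposes the unproved existence of $SHS(X)$ with its mixed Hodge structure at the cusps, so the argument is conditional twice over. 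That matches the status of the statement in the paper itself: it is a conjecture, and your proposal, like the paper, leaves it one.
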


This is the strongest obstruction to Shafarevich conjecture \cite{EKPR} and SHS gives an approach proving that universal coverings of smooth projective varieties are holomorphically convex.

Observe that the twistor  family of compactified SHS depends on the choice of Landau--Ginzburg model and still   computes some purely  categorical invariants. It is natural to ask whether this family rigidifies the data. In particular we pose:

\begin{question} Does the  twistor  family of compactified SHS of  bounded derived category of coherent sheaves  of a smooth projective variety $X$ recover the fundamental group of $X$?
\end{question}

\section{Multipotential Landau--Ginzburg models and Hodge structures.}

\label{section:multi LG}

In this section we extend the correspondence among  categories  and
Stability Hodge Structures further.  We underscore  the idea  that
 rich geometry of the Landau--Ginzburg models gives a possibility of constructing interesting  Stability Hodge Structures with many filtrations.

\subsection{Multipotential Landau--Ginzburg model for cubic fourfold}

We describe fiberwise compactifications of multipotential Landau--Ginzburg models for cubic fourfold $X$.
This example is representative and illustrates
what we mean by a multipotential Landau--Ginzburg model in general.

The Hori--Vafa toric Landau--Ginzburg for $X$ is
$$
w=\frac{(x+y+1)^3}{xyt_1t_2}+t_1+t_2.
$$
The cubic fourfold is of index 3. So there are two decompositions of its anticanonical divisor:
$3H=H+H+H$ and $3H=2H+H$. Multipotential Landau--Ginzburg models correspond to such decompositions.

\medskip

First we describe compactification for the first decomposition.
We have the family
$$
\frac{(x+y+1)^3}{xyt_1t_2}=w_1,\ \ \ \ \ t_1=w_2,\ \ \ \ \ t_2=w_3,
$$
where $w_i$'s are complex parameters.
After compactifying we
get the family
$$
(x+y+z)^3=w_1w_2w_3xyz
$$
of elliptic curves over $\CC^3$. After blowing up the point
$(0,0,0)$ we get divisor over this point. After that we  resolve the rest of the singularities. The restriction of our family to planes
$w_j=const\neq 0$ is the Landau--Ginzburg model for cubic threefold so
we get the following configuration of singularities.

\begin{enumerate}
    \item Ordinary double points along surface $w_1w_2w_3=27$.
    \item 7 lines forming diagram of type $\widetilde{\mathbb E}_6$ over
    planes $w_1=0$, $w_2=0$, and $w_3=0$.
    \item 5 surfaces over axes $w_1$, $w_2$, $w_3$.
    \item A divisor over (0,0,0).
\end{enumerate}

After projection on the diagonal $\CC^3\to
    \CC$ we get a fiberwise open part of usual Landau--Ginzburg model
    for cubic fourfold. Its fiber over zero consists of  divisor
    described above and an elliptic fibration over the plane passing through the
    origin and orthogonal to the diagonal. The intersection of these
    divisors is an elliptic K3 surface with 3 fibers of type
    $\widetilde{\mathbb E}_6$ corresponding to intersections of this orthogonal plane with
    planes $w_1=0$, $w_2=0$, and $w_3=0$.

\medskip

Now we describe multipotential Landau--Ginzburg model for the second case $2H+H$. We have the family
$$
\frac{(x+y+z)^3}{xyzt_1t_2}=w_1,\ \ \ \ \ \ \ t_1+t_2=w_2.
$$

In other words,

$$
(x+y+z)^3=w_1(w_2-t)txyz
$$
(we denote $t_1$ by $t$ for simplicity).

This family of surfaces can be obtained from decomposition $H+H+H$
by a projection along $w_2+w_3=0$. Indeed, the equation of this
family over $\CC^2$ can be obtained from the equation for the family
over $\CC^3$ by coordinate change $w_2+w_3\to w_2$, $w_3\to t$.

So the singularities are the following.

\begin{enumerate}
    \item Ordinary double points along a curve.
    \item 5 surfaces over axis $w_2=0$.
    \item 17 surfaces over axis $w_1=0$. Their configuration can be
    described as follows: configuration of curves of type
    $\widetilde{\mathbb E}_6$ multiplied by line and two examples of
    configuration of 5 surfaces described above. Each of them are glued by
    intersection of ``pages'' with line of multiplicity 3 on $\widetilde{\mathbb E}_6\times pt$.
    \item A divisor over (0,0,0).
\end{enumerate}

The restriction of this family to the line $w_1=const\neq 0$ is (up
to a multiplication of a potential by a constant) an open part of
Landau--Ginzburg model for cubic threefold. Indeed,
\begin{multline*}
(x+y+z)^3-w_1(w_2-t)txyz=(x+y+z)^3-(\sqrt{w_1}w_2-(\sqrt{w_1}t))(\sqrt{w_1}t)xyz=\\
(x+y+z)^3-(w-t_1)t_1xyz,
\end{multline*}
where $w=\sqrt{w_1}w_2$ and $t_1=\sqrt{w_1}t$.

The restriction to the line $w_2=const\neq 0$ is an open part of
Landau--Ginzburg model for threefold complete intersection of
quadric and cubic. Indeed,
\begin{multline*}
(x+y+z)^3-w_1(w_2-t)txyz=(x+y+z)^3-(w_1w_2^2)\left(1-\frac{t}{w_2}\right)\left(\frac{t}{w_2}\right)xyz=\\
(x+y+z)^3-w(1-t_1)t_1xyz,
\end{multline*}
where $w=w_1w_2^2$ and $t_1=t/w_2$.

On the other hand, compactified (singular) Landau--Ginzburg model
for intersection of quadric and cubic is
$$
(t_1+t_2)^2(x+y+z)^3-wt_1t_2xyz=t_0^2(x+y+z)^3-w(t_0-t_1)t_1xyz,
$$
where $t_0=t_1+t_2$. In the local chart $t_0=1$ we get the family
written down before.

Thus, after compactifying fibers of family corresponding to $2H+H$
we get 4 additional surfaces over  $w_2$ axis  and all together  $21=17+4$ surfaces.

\subsection{Hodge structures with many filtrations}

We now utilize  above construction of multipotential  Landau--Ginzburg models
from the point of view of  twistor families. This part of the paper is highly speculative.

 It is expected that Fukaya--Seidel categories  with many potentials
can be defined similarly to Fukaya--Seidel categories  with one potential.
In this case we have a divisor $S$  of singular fibers and thimbles involved reflect no only geometry of the fibers but geometry of $S$ as well.
In similar way  we can associate to a Fukaya--Seidel category with many potentials a Stability Hodge Structure with a   formal scheme over
 $M(\PP^k, CY)$ as a fiber over zero. The following conjecture (briefly explained in Table~\ref{tab:SHSHS3}) suggests a way of constructing Hodge structures with multiple filtrations.

\begin{conj}[see \cite{KKPS}] The mixed Hodge structure over formal scheme over
 $M(\PP^k, CY)$ as fiber over zero is a mixed Hodge structure with many filtrations.

\end{conj}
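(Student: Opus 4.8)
The plan is to reduce the conjecture to the several-variable theory of limit mixed Hodge structures, with the $k$ potentials supplying $k$ commuting directions of degeneration. First I would set up the multipotential analogue of the dg-complex governing the formal moduli space $M$: in place of the single differential $df$ one takes the Koszul-type multi-differential assembled from $df_1,\ldots,df_k$ attached to the map $f=(f_1,\ldots,f_k)\colon \overline{Y}\to(\CP^1)^k$. Solving the corresponding Maurer--Cartan equations produces the formal scheme over $M(\PP^k, CY)$ that sits as the fiber over zero of the twistor family, exactly parallel to the single-potential theorems quoted above. The object carrying the Hodge-theoretic data is the perverse sheaf of vanishing cycles for $f$; for several potentials it refines to an iterated vanishing-cycle object, and the essential structural input is that near a point $p$ of the divisor $D$ at infinity the $k$ branches of $D$ correspond precisely to the $k$ coordinate degenerations $w_i\to 0$.

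Second, I would extract from this the $k$ commuting monodromy operators. Around $p$ the local system $V$ carries commuting automorphisms $T_1,\ldots,T_k$, one for each branch of $D$; by the local monodromy theorem each $T_i$ is quasi-unipotent, so the logarithms of their unipotent parts give commuting nilpotent endomorphisms $N_1,\ldots,N_k$ of the limit. The explicit cubic-fourfold computation is the template here: the degenerations over the axes $w_1=0$, $w_2=0$, $w_3=0$, with their $\widetilde{\mathbb E}_6$ configurations, are exactly normal-crossing-type degenerations, and each axis contributes one nilpotent logarithm. Each $N_i$ determines its own monodromy weight filtration $W^{(i)}$, and these are the ``many filtrations'' of the statement; they are to be mutually compatible and compatible with a single Hodge filtration $F$ induced by the vanishing-cycle mixed Hodge module.

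Third, I would invoke the several-variable theory of polarized variations of mixed Hodge structure --- the $SL_2$-orbit theorem of Cattani--Kaplan--Schmid together with Kashiwara's admissibility results --- to show that over the punctured polydisk $V$ underlies an admissible variation and that the limit at $p$ carries a mixed Hodge structure whose weight data are organized by the relative monodromy weight filtrations $W(N_{i_1}+\cdots+N_{i_j})$ attached to the coordinate faces. Transferring this structure to the completion of the local ring $\cO_p$ --- in the same manner as the single-potential conjecture on the relative completion of $V$ at the fixed points of the $\CC^*$-action --- then yields a mixed Hodge structure carrying all $k$ filtrations simultaneously, which is the assertion.

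The main obstacle will be the \emph{compatibility} of the several weight filtrations: one must show that the commuting nilpotents $N_1,\ldots,N_k$ admit a well-defined relative monodromy weight filtration and that the iterated vanishing-cycle object is a genuine admissible variation of mixed Hodge structure, not merely a juxtaposition of one-variable degenerations. Concretely this requires establishing that the multipotential degeneration is of normal-crossing type in general --- it is manifestly so in the cubic-fourfold example, but the general case needs the toric Lafforgue compactification $\mathcal{X}_{\mathrm{Laf}(A)}$ to control the boundary strata --- and then verifying the Cattani--Kaplan--Schmid cone condition so that the $SL_2$-orbit theorem applies. I expect the \textbf{strictness} property promised earlier for $SHS$ to be precisely what guarantees that the resulting filtrations interact strictly, which is the defining feature of a mixed Hodge structure with many filtrations.
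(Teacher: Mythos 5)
There is an important mismatch of expectations here: the paper does not prove this statement at all. It is stated as a conjecture, attributed to the in-preparation work \cite{KKPS}, and the surrounding subsection is explicitly flagged by the authors as ``highly speculative.'' The only supporting material the paper supplies is the explicit fiberwise-compactified multipotential Landau--Ginzburg model for the cubic fourfold (the two decompositions $3H=H+H+H$ and $3H=2H+H$, with the configurations of singular fibers over the axes $w_i=0$) and the analogy of Table~\ref{tab:SHSHS3} matching multipotential Landau--Ginzburg moduli with ``generalized multi twistor families'' over a $k$-simplex. So your proposal cannot be measured against a paper proof; the question is whether it closes the conjecture on its own, and it does not.

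Your outline has two genuine gaps. First, each of its three steps is carried by machinery whose applicability is exactly what would need to be established: the unobstructedness of the Koszul-type Maurer--Cartan complex for $(f_1,\ldots,f_k)$, the existence of the iterated vanishing-cycle object as an honest variation, the normal-crossing structure of the boundary of $M(\PP^k,CY)$ in general (not just in the cubic fourfold example), and admissibility together with the Cattani--Kaplan--Schmid cone condition. You name these yourself as ``the main obstacle,'' which means the proposal defers, rather than supplies, the content of the conjecture; in particular the appeal to ``strictness'' at the end is circular, since strictness is advertised in the paper as a hoped-for \emph{consequence} of the SHS formalism, not an available input. Second, and more structurally, the machinery you invoke may prove the wrong statement: in the several-variable theory, admissibility forces the relative weight filtrations $W(N_{i_1}+\cdots+N_{i_j})$ to assemble into a single limit mixed Hodge structure at each face --- one weight filtration and one Hodge filtration, independent of the ray chosen in the open cone. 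What the paper intends by ``mixed Hodge structure with many filtrations'' appears to be a genuinely new object tied to the generalized multi-twistor family over a $k$-simplex, with extensions of the form $M(\PP^1,CY)\boxtimes M(\PP^1,CY)$ and extending filtrations $u_i\boxtimes u_j$ as in Table~\ref{tab:SHSHS3}. Reducing the conjecture to classical Cattani--Kaplan--Schmid/Kashiwara theory therefore risks producing a classical limit MHS rather than the multi-filtration structure the conjecture is actually about; any serious attempt must first say precisely what category of ``mixed Hodge structures with many filtrations'' the limit is supposed to live in.
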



\begin{table}[h]
\begin{center}
\begin{tabular}{|c|c|}
\hline
\begin{minipage}[c]{2.5in}
\centering
\medskip

Landau--Ginzburg moduli spaces

\medskip

\end{minipage}
&
\begin{minipage}[c]{2.5in}
\centering
\medskip

Nonabelian Hodge structures

\medskip

\end{minipage}
\\\hline\hline
\begin{minipage}[c]{2.5in}
\centering
\medskip

$M(\PP^1, CY)$

Landau--Ginzburg model with one potential:

The fiber over zero  is a formal scheme over  $M(\PP^1, CY)$,  generic fibers are ${\sf Stab}$.

\medskip

\end{minipage}
&
\begin{minipage}[c]{2.5in}
\centering
\medskip

Twistor family --- Nonabelian Hodge Structure with one weight  filtration.

\medskip

\end{minipage}
\\\hline\hline
\begin{minipage}[c]{2.5in}
\centering
\medskip

Landau--Ginzburg models with $k$ potentials

\medskip

\end{minipage}
&
\begin{minipage}[c]{2.5in}
\centering
\medskip

Generalized  twistor families with $k$ parameters.
\medskip

\end{minipage}
\\\hline\hline
\begin{minipage}[c]{2.5in}
\centering
\medskip

The zero fiber is a formal scheme  over  $M(\PP^k, CY)$, fibers (over a point in $\CC^k$) are ${\sf Stab}$.

\medskip

\end{minipage}
&
\begin{minipage}[c]{2.5in}
\centering
\medskip

Generalized multi twistor family over a  $k$-simplex.

\medskip

\end{minipage}
\\\hline
\begin{minipage}[c]{2.5in}
\centering
\medskip

Extensions

$M(\PP^1, CY)\boxtimes M(\PP^1, CY)$

\medskip

\end{minipage}
&
\begin{minipage}[c]{2.5in}
\centering
\medskip

Extending filtrations $u_i\boxtimes u_j$.

\medskip

\end{minipage}
\\\hline
\end{tabular}
\end{center}
\caption{Creating Hodge structures with multiple filtrations.}
\label{tab:SHSHS3}
\end{table}

\section{Birational transformations and Poisson  varieties}

\label{section:birational}

Discussion from  previous sections suggests that there is a connection between moduli space of Landau--Ginzburg models, generators and birational geometry.

\begin{table}[h]
\begin{center}
\begin{tabular}{|c|c|}
\hline
\begin{minipage}[c]{2.5in}
\centering
\medskip

Landau--Ginzburg model

\medskip

\end{minipage}
&
\begin{minipage}[c]{2.5in}
\centering
\medskip

Stability

\medskip

\end{minipage}
\\\hline\hline
\begin{minipage}[c]{2.5in}
\centering
\medskip

Usual Landau--Ginzburg model

$\includegraphics[width=0.8\nanowidth]{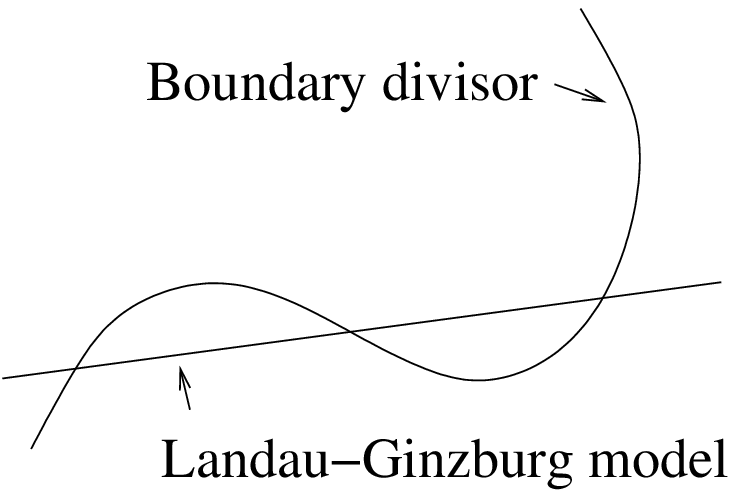}$

\medskip

\end{minipage}
&
\begin{minipage}[c]{2.5in}
\centering
\medskip

$\Omega_X^3$

\medskip

\end{minipage}
\\\hline
\begin{minipage}[c]{2.5in}
\centering
\medskip

$\includegraphics[width=0.8\nanowidth]{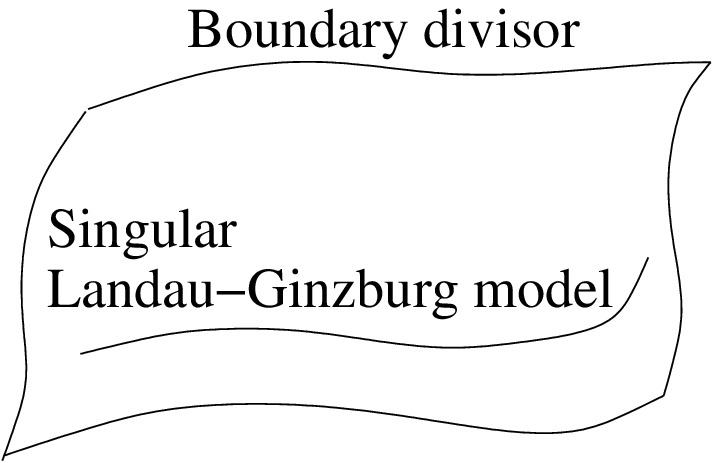}$

\medskip

\end{minipage}
&
\begin{minipage}[c]{2.5in}
\centering
\medskip

$\Omega^3_{X\setminus D}$

$D$ here is a divisor with stratification of singular set.

\medskip

\end{minipage}
\\\hline
\begin{minipage}[c]{2.5in}
\centering
\medskip

$\includegraphics[width=0.8\nanowidth]{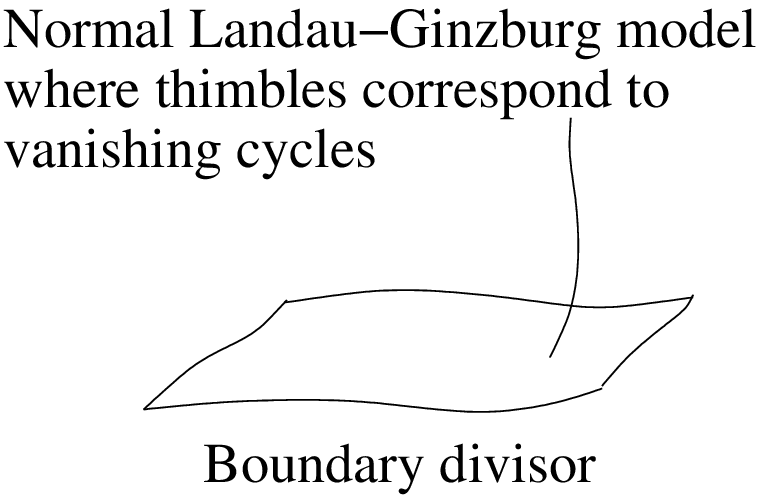}$

\medskip

\end{minipage}
&
\begin{minipage}[c]{2.5in}
\centering
\medskip

$\Omega_D$

${\mathrm Sing}\,D$ stability conditions of the vanishing cycles on $D$.

\medskip

\end{minipage}
\\\hline
\end{tabular}
\end{center}
\caption{Stability Clemens--Schmidt sequence.}
\label{tab:MIAMI1}
\end{table}

Table~\ref{tab:MIAMI1} gives a version of noncommutative Clemens--Schmidt sequence for geometric stability conditions --- log 3-forms.
This table treats the case of three-dimensional Calabi--Yau manifolds (four dimensional
Landau--Ginzburg models) but the situation in general should be rather similar.
In the case at hand (three-dimensional Calabi--Yau manifold) --- the stability conditions are just holomorphic 3-forms. For the quotient category (the category which produces stability conditions of the compactification)  we get stability conditions to be holomorphic 3-forms
vanishing in a stratified way over a divisor $D$.
The vanishing cycles define a subcategory with its own moduli space of stability conditions and the relative (with respect to this subcategory) WCF defining an integrable  system (in general a Poisson variety). The corresponding Landau--Ginzburg models can be seen as follows:

\begin{enumerate}
  \item The Landau--Ginzburg models associated with quotient categories are given by monotonic maps passing through an intersection of many boundary divisors in
$M(\PP^k, CY)$.
  \item
The local categories of vanishing cycles are given by    Landau--Ginzburg models
totally within intersections of divisors.
\end{enumerate}

From the perspective of generators the above splitting corresponds to splitting of generator to union of   generators associated with  subcategory of vanishing cycles and the quotient category. In fact we get a sequence of splittings --- a flag parallel to Okounkov polytopes.

These observations suggest the following conjecture, treated in \cite{DKK}.


\begin{conj} One-parametric families  of Landau--Ginzburg models   parameterize Sarkisov links.
\end{conj}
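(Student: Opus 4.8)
The plan is to realize a one-parameter family of Landau--Ginzburg models as a complex line in the secondary stack $\mathcal{X}_{\textrm{Sec}(A)}$ and to translate, wall by wall, the degenerations it meets into the elementary birational modifications of the Sarkisov program. The starting point is the dictionary of Table~\ref{tab:Section 2 tab 1} together with the two theorems of~\cite{DKK}: every Landau--Ginzburg mirror of a Fano is a superposition of circuits, and $\mathcal{X}_{\textrm{Sec}(A)}$ is a moduli space of such models with certain wall crossings recorded by passage through the discriminant $Z_A$. A one-parameter family is thus a monotone path, i.e. a point of the moduli $\mathcal{W}_A = M(\PP^1,\mathcal{X}_{\textrm{Sec}(A)})$ classified by the monotone path polytope of $\textrm{Sec}(A)$, and since any complex line degenerates to a chain of equivariant lines, it factors into a sequence of circuits. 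So the first step is to reduce the conjecture, via this factorization, to the case of a single circuit.

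The second step is to match a single circuit to a single Sarkisov link. Here the combinatorics is exactly right: a circuit carries precisely two coherent triangulations, so the secondary base is the weighted projective line $\PP(a,b)$ with two distinguished orbifold points and the crossing $Z_A$ in between. On the mirror Fano side these two triangulations are the two toric degenerations $T_0,T_1$ appearing in the examples of Section~\ref{section:clusters} (quadric, cubic threefold, cubic fourfold, quadrics in $\PP(1,1,1,1,2)$), related by a cluster-type transformation. I would interpret this rank-two ambiguity as the $2$-ray game underlying a Sarkisov link: the common blow-up sitting over the wall plays the role of the link's central model, and the two extremal contractions to the degenerate toric limits are the two ends of the link. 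Concretely, the Minkowski-versus-cluster description of Proposition~\ref{proposition:Minkowski-cluster} produces the modification $x_0 \mapsto x_0/f_2$, whose Newton-polytope shadow should be read off as a divisorial extraction, a flip, or a transposition of Mori fiber structures according to the combinatorial type of the circuit.

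The third step is to assemble the circuits into a full link and verify both directions. One direction invokes the Sarkisov program itself (in the form of Corti and of Hacon--McKernan): every birational map between Mori fiber spaces factors into links, and one must show that each link is realized by a chain of circuit crossings, using that weighted degenerations such as the Markov degenerations $\PP(a^2,b^2,c^2)$ of $\PP^2$ are generated by elementary cluster transforms as in Proposition~\ref{proposition:Galkin}. The other direction reads a monotone path off as a link by tracking which extremal ray is contracted at each wall. To make \emph{parameterize} precise I would aim for a bijection between the edges of the monotone path polytope of $\textrm{Sec}(A)$ and the links in a fixed factorization, compatible with composition.

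The hard part will be to certify that the combinatorial data of a circuit determines the \emph{type} of link, and in particular reproduces the Sarkisov degree and the well-ordering that guarantees termination: the circuit picture naturally supplies the $2$-ray game but not, a priori, the numerical invariants distinguishing the four link types. A second and more serious obstacle is that the explicit mirror dictionary is available only for toric and complete-intersection Fanos, whereas Sarkisov links live among general Mori fiber spaces; bridging this gap requires either extending the Landau--Ginzburg construction beyond the Hori--Vafa cases or arguing that the relevant birational data is deformation-invariant and can be computed on a toric degeneration. I expect this deformation-invariance, phrased inside the Stability Hodge Structure of Section~\ref{section:SHS}, to be precisely where the machinery of~\cite{KKPS} and~\cite{DKK} must do the essential work.
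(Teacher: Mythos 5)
The paper does not prove this statement: it is stated as a conjecture, with its treatment explicitly deferred to~\cite{DKK}, which is in preparation. The only support the paper offers is the interpretive remark that, in its picture, Sarkisov links become one-parameter families connecting circuits, together with the dictionary of Table~\ref{tab:Section 9} (wall crossings inside a component of stability conditions versus passage between components) and the circuit/monotone-path-polytope description of $\mathcal{W}_A$ from Section~\ref{section:Higgs bundles}. Your outline reproduces exactly this intended picture --- one-parameter families as monotone paths in $\mathcal{X}_{\textrm{Sec}(A)}$, factorization into circuits, circuits matched to links via a $2$-ray game --- so as a reading of what the authors have in mind it is faithful.

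But as a proof it has genuine gaps, and they are precisely the ones you flag yourself; flagging them does not close them. First, the identification of a single circuit crossing with a single Sarkisov link is asserted, not established: the modification $x_0 \mapsto x_0/f_2$ of Proposition~\ref{proposition:Minkowski-cluster} and the elementary transformations of Proposition~\ref{proposition:Galkin} act on the \emph{mirror} Laurent polynomial, and converting them into birational modifications of $X$ itself (divisorial extraction, flip, change of Mori fibration) requires the B-side column of Table~\ref{tab:Section 2 tab 1}, i.e.\ a form of homological mirror symmetry that is itself conjectural here; nothing in your argument constructs the central model of the link over the wall, nor recovers the numerical invariants (Sarkisov degree, the four link types, termination) from circuit combinatorics. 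Second, surjectivity --- that every link between arbitrary Mori fiber spaces is realized by a chain of circuit crossings --- cannot follow from invoking the Sarkisov factorization theorem, because the Landau--Ginzburg dictionary you use is only available for toric and complete-intersection Fanos; the deformation-invariance that would bridge this is exactly the unpublished machinery of~\cite{KKPS} and~\cite{DKK} to which you defer. In short, your proposal is a plausible research program coinciding with the paper's announced one, not a proof; the statement remains open both in the paper and in your attempt.
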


Recall that Sarkisov links \cite{SARK} are birational maps (birational cobordisms) connecting two Mori fibrations. I our interpretation  Sarkisov links become families connecting circuits.
In fact we have  more general picture on the connections between
moduli spaces of Landau--Ginzburg models and birational geometry. Namely we conjecture that geometry of   moduli spaces of Landau--Ginzburg models  for the mirror of Fano manifold  $X$ determines  its  birational geometry. In particular we see a connection with  relations between Sarkisov links
and then relations between relations and so on. We summarize our picture in Table~\ref{tab:Section 9}. For more details see \cite{DKK}, \cite{BFK3}, \cite{CKP}, \cite{DKLP}.

\begin{table}[h]
\begin{center}
\begin{tabular}{|c|c|}
\hline
\begin{minipage}[c]{2.5in}
\centering
\medskip

Sarkisov programs

\medskip

\end{minipage}
&
\begin{minipage}[c]{2.5in}
\centering
\medskip

Changes in the spaces of stability conditions

\medskip

\end{minipage}
\\\hline\hline
\begin{minipage}[c]{2.5in}
\centering
\medskip

Commutative Sarkisov program: Sarkisov faces.

\medskip

\end{minipage}
&
\begin{minipage}[c]{2.5in}
\centering
\medskip

Wall crossings inside a component of stability conditions.

\medskip

\end{minipage}
\\\hline
\begin{minipage}[c]{2.5in}
\centering
\medskip

Non-commutative Sarkisov program: non-commutative cobordisms.

\medskip

\end{minipage}
&
\begin{minipage}[c]{2.5in}
\centering
\medskip

Passing from one component of stability conditions to another one.

\medskip

\end{minipage}
\\\hline
\end{tabular}
\end{center}
\caption{Birational geometry.}
\label{tab:Section 9}
\end{table}

\medskip

{\bf Acknowledgements.} This paper came out of  a talk the first author gave in G\"okova, Turkey 2011 (Sections~\ref{section:wall crossings},~\ref{section:spectra},~\ref{section:birational}) and discussions thereafter. We are very grateful to the organizers and in particular  S.\,Akbulut, D.\,Auroux and G.\,Mikhalkin for inviting us.

We thank M.\,Kontsevich for sharing his ideas and explaining what SHS should be. Many thanks to D.\,Auroux, G.\,Kerr, C.\,Diemer, D.\,Favero, Y.\,Soibelman, and T.\,Pantev for explaining some of the notions used in the paper.
We thank S.\,Galkin for his explanations of cluster transformations of weak Landau--Ginzburg models.
We thank N.\,Ilten for his explanation of embedded toric degenerations technique.

\end{document}